\newcommand{\subsectionruninhead}{\@startsection{subsection}{2}{0mm}
	{-\baselineskip}{-0mm}{\bf\large}}
\newcommand{\subsubsectionruninhead}{\@startsection{subsubsection}{3}{0mm}
	{-\baselineskip}{-0mm}{\bf\normalsize}}
\newtheorem*{theorem*}{Theorem}
\newtheorem*{proof*}{Proof}
\newtheorem*{proposition*}{Main Proposition}
\newtheorem*{question*}{Question}
\newtheorem*{notation*}{Notation}
\newtheorem*{corollary*}{Corollary}
\newtheorem*{claim*}{Claim}
\newtheorem*{remark*}{Remark}
\newtheorem{conjecture}{Conjecture}
\newtheorem{proposition}{Proposition}[section]
\newtheorem{theorem}[proposition]{Theorem}
\newtheorem{corollary}[proposition]{Corollary}
\newtheorem{lemma}[proposition]{Lemma}
\newtheorem{claim}[proposition]{Claim}
\theoremstyle{definition}
\theoremstyle{remark}
\newtheorem{remark}[proposition]{Remark}
\numberwithin{equation}{section}
\def\NN{\mathbb{N}}
\def\RR{\mathbb{R}}
\def\TT{\mathbb{T}}
\def\ZZ{\mathbb{Z}}
\def\tildeF{\tilde{\mathcal{F}}}
\def\e{{\varepsilon}}
\def\tildeF{\widetilde{\mathcal{F}}}
\begin{document}

	\title{Smooth Stable Foliations of Anosov Diffeomorphisms}
	\author{Ruihao Gu\footnote{Email: ruihaogu@fudan.edu.cn}}
	\date{\today}
	\maketitle
	
	\begin{abstract}
		In this paper, we focus on the rigidity of $C^{2+}$-smooth codimension-one stable foliations of Anosov diffeomorphisms. Specifically, we show that if the regularity of these foliations is slightly bigger than $2$, then they will have the same smoothness as the diffeomorphisms.
	\end{abstract}

	\section{Introduction}
	
	Let $f$ be a $C^r$-smooth ($r\geq 1$) Anosov diffeomorphism of a smooth closed Riemannian manifold $M$, i.e, there exists a $Df$-invariant splitting $TM=E^s_f\oplus E^u_f$ such that $Df$ is contracting on $E_f^s$ and expanding on $E_f^u$, uniformly. It is well known that the distributions $E_f^s$ and $E_f^u$ are H\"older continuous and uniquely integrable to foliations $\mathcal{F}^s_f$ and $\mathcal{F}^u_f$, respectively  with  $C^r$-smooth leaves varing continuously with respect to $C^r$-topology. However, the regularity of these foliations may not be $C^r$. Indeed, if the regularity  $r\geq 2$ of $f$,  the foliation $\mathcal{F}^s_f$ (or symmetrically $\mathcal{F}^u_f$) is absolutely continuous and if we further assume that it is codimension-one, then it is $C^1$-smooth \cite{AnosovC2,hirschpugh,Pesin04}  but could not be $C^2$ in general \cite{AnosovC2}. 
	
	In 1991, Flaminio and Katok \cite{FK91} proposed the following conjecture.
	
	\begin{conjecture}[\cite{FK91}]
		If the foliations $\mathcal{F}^s_f$ and $\mathcal{F}^u_f$ of a $C^k$-smooth Anosov diffeomorphism $f:M\to M$ are both $C^2$-smooth, then $f$ is $C^{\max\{2,k\}}$ conjugate to a hyperbolic automophism of an infra-nilmainfold. 
	\end{conjecture}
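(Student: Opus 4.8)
\section*{Proof proposal}

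The plan is to reduce the conjecture to an affine rigidity statement by manufacturing, out of the two $C^2$ foliations, a canonical $f$-invariant affine connection, and then to extract the infra-nilmanifold structure from the flatness of this connection. This mirrors the strategy of Kanai and of Benoist--Foulon--Labourie for contact Anosov flows, but carried out for diffeomorphisms and at finite regularity. There are two logically separate tasks: (i) produce the algebraic model, i.e.\ show that $M$ is an infra-nilmanifold and that $f$ is topologically conjugate to a hyperbolic automorphism; and (ii) show that the conjugacy can be upgraded to class $C^{\max\{2,k\}}$. The smoothness of both foliations is exactly what makes (i) tractable, since the general classification of Anosov diffeomorphisms is open and here the hypothesis supplies precisely the rigid geometric structure that the classification otherwise lacks.

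First I would build the connection. Since $\mathcal{F}^s_f$ and $\mathcal{F}^u_f$ are transverse, complementary and $C^2$, their holonomies are $C^1$ and the local product charts are $C^2$; this lets leafwise data vary $C^1$ transversally. Along each stable leaf I would take the canonical $f$-contracted connection $\nabla^s$, obtained as the limit of $f^n_*\nabla_0$ as $n\to+\infty$ (contraction regularizes, and $f$-invariance is then automatic); symmetrically $\nabla^u$ along unstable leaves using $f^{-n}$. For the mixed directions I use the Bott-type prescription $\nabla_X Y:=[X,Y]^u$ for $X\in E^s_f,\ Y\in E^u_f$ and $\nabla_Y X:=[X,Y]^s$, which simultaneously makes the splitting $\nabla$-parallel and forces the torsion to be $[X,Y]^s+[X,Y]^u-[X,Y]$; because $E^s_f$, $E^u_f$ and the bracket are all $f$-invariant, the resulting global connection $\nabla$ is $f$-invariant. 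The bracket costs one derivative, so $\nabla$ is $C^1$ precisely when the foliations are (a little better than) $C^2$ --- this is the source of the ``slightly bigger than $2$'' threshold --- and hence its curvature $R$ is a well-defined continuous, $f$-invariant tensor.

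The dynamical heart is to show that $\nabla$ is the flat connection of a nilpotent Lie group. Since $R$ is $f$-invariant and continuous, I evaluate it on vectors adapted to the coarse-Lyapunov refinement of $E^s_f\oplus E^u_f$: under $Df^n$ each component of $R$ scales by a definite product of contraction/expansion rates, and $f$-invariance, $R(Df\,\cdot,Df\,\cdot)Df\,\cdot=Df\,R(\cdot,\cdot)\cdot$, forces every component with nonzero net rate to vanish as $n\to\pm\infty$; the remaining ``balanced'' components are controlled by the Bianchi identity together with the leafwise structure, so $R\equiv 0$. The only genuine subtlety here is that resonances between Lyapunov exponents make the leafwise normal form polynomial rather than linear; I would absorb these by allowing $\nabla$ to carry (flat, nilpotent) torsion rather than insisting it be torsion-free, so that the model is the left-invariant flat connection of the appropriate nilpotent group. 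Flatness plus $f$-invariance then yields completeness of the affine structure, so the developing map identifies the universal cover with a simply connected nilpotent Lie group $N$, realizes $\pi_1(M)$ as a lattice (up to finite index) of affine automorphisms of $N$, and realizes $f$ as a hyperbolic automorphism; by the Bieberbach--Auslander and Fried--Goldman structure theory this is exactly an infra-nilmanifold carrying a hyperbolic automorphism, giving (i).

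Finally, for the regularity in (ii), the developing map is as smooth as $\nabla$, so the topological conjugacy $h$ to the algebraic model is at least $C^{1+}$; I would then bootstrap. Along each foliation $h$ conjugates $f$ to the (smooth) linear action on $N$, so by the usual cohomological and nonstationary-linearization argument $h$ is $C^{\max\{2,k\}}$ along the leaves of both $\mathcal{F}^s_f$ and $\mathcal{F}^u_f$, with matching regularity of the (now smooth) holonomies; Journ\'e's lemma then promotes this to global $C^{\max\{2,k\}}$ regularity of $h$, and the invariance equation $h\circ f=L\circ h$ closes the bootstrap. I expect the \emph{main obstacle} to be the dynamical rigidity of $\nabla$ at finite regularity --- proving $R\equiv 0$, equivalently the homogeneity of the affine structure, when the connection is only $C^1$ and resonances are present --- since this is where the hyperbolic dynamics must override the lack of a priori smoothness, and where the passage from local leafwise models to a single global algebraic structure actually takes place.
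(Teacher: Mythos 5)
You have set out to prove a statement that the paper itself does not prove: it is quoted verbatim as the Flaminio--Katok \emph{conjecture}, and the paper explicitly says both of its halves remain open (the topological half contains Smale's classification conjecture; the regularity half is what the paper attacks only in the special case of a codimension-one stable foliation on $\TT^d$, via an induced circle diffeomorphism and KAM, not via invariant connections). So there is no ``paper proof'' for your argument to match, and the real question is whether your sketch closes the open problem. It does not, and the gaps are concrete. First, the regularity bookkeeping fails at the crucial step: a $C^2$ (or $C^{2+\e}$) foliation has only $C^{1}$ (resp.\ $C^{1+\e}$) tangent distribution, so the Bott-type prescription $\nabla_X Y=[X,Y]^u$ involves brackets of $C^{1+\e}$ fields and produces a connection of class $C^{\e}$ at best, not $C^1$; its curvature $R$ is then not a well-defined continuous tensor, and the entire dynamical argument ``$R$ scales under $Df^n$, hence vanishes'' has no object to act on. Second, even granting a continuous $R$, the invariance-plus-scaling argument only kills components whose net Lyapunov weight is nonzero; the resonant (``balanced'') components are exactly the ones that survive --- they encode the Lie bracket of the would-be nilpotent model --- and your appeal to ``the Bianchi identity together with the leafwise structure'' is not an argument for their vanishing or parallelism. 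This is precisely where Benoist--Labourie, in the closest genuine theorem (symplectic Anosov diffeomorphisms with $C^\infty$ foliations are affine), need the invariant symplectic form, $C^\infty$ regularity, and Gromov's theory of rigid geometric structures; no invariant volume or symplectic form is available under the conjecture's hypotheses, which is a large part of why it is open.

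Third, the sentence ``Flatness plus $f$-invariance then yields completeness'' asserts, without proof, a statement in the territory of the Markus conjecture; completeness of compact affine (or flat-with-parallel-torsion) manifolds is itself open in general, and the Fried--Goldman machinery you invoke requires first knowing the affine holonomy is virtually nilpotent, which you have not established. Finally, your bootstrap in step (ii) quietly assumes that $h$ is smooth along both foliations, but leafwise smoothness of the conjugacy requires periodic-data matching ($\lambda^u_f(p)\equiv\lambda^u_A$ in the paper's setting), which is a conclusion the paper extracts with real work (the $\ZZ^d$-action, Claim-level construction of the circle maps, Theorem \ref{thm5}, and Lemma \ref{lem AC to Cr}) and only in codimension one; de la Llave's counterexample on $\TT^d$, $d\geq 4$, cited in the paper, shows that in higher codimension the regularity bootstrap for the conjugacy genuinely can fail at finite smoothness, so any correct proof must confront that example rather than pass it by. In short: your program is a reasonable sketch of the Kanai/Benoist--Foulon--Labourie strategy, but each of its three load-bearing steps (existence of a $C^1$ invariant connection at $C^{2+}$ foliation regularity, $R\equiv 0$ in the presence of resonances, completeness of the developing map) is an open problem, not a lemma.
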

	
	This conjecture can be divided into two parts which are still open. One is the famous conjecture of  Smale \cite{Smale67} which expects to classify the Anosov diffeomorphisms in the sense of topology, i.e, every Anosov diffeomorphism is topologically conjugate to a hyperbolic automophism of an infra-nilmainfold. The other one is a rigidity issue, i.e, whether the smooth foliation leads to a higher regularity of the conjugacy or not, since the conjugacy between two Anosov diffeomorphisms is usually H\"older continuous only.
	
	The topological classification conjecture has some evidences \cite{Franks1970,Manning1974,Newhouse1970}. For instance, when $E^s_f$ (or $E^u_f$) is codimension-one, then $M$ is a torus. Moreover,  under the assumption of $M$ is a nilmanifold, $f$ is conjugate to a hyperbolic algebraic model. These are also why  researches of rigidity usually focus on the toral Anosov diffeomorphisms. The rigidity issue has been extensively and deeply studied under some restriction of Lyapunov exponent \cite{dela,GG2008,yangradu}. However we know few of the rigidity on smooth foliation. Indeed, as far as authors know, it has only partial answer in \cite{FK91,katokhurder,Ghys93,dela}. 
	
	In \cite{FK91}, Flaminio and Katok proved that  a volume-preserving Anosov diffeomorphism $f$ of  $2$-torus $\TT^2$  with $C^r (r\geq 2)$ stable and unstable foliations is $C^r$ conjugate to a linear one. Moreover, they obtained a similar result for an Anosov diffeomorphism $f$ of  $\TT^4$ preserving a symplectic form with  $C^{\infty}$-smooth stable and unsable foliations.  However de la Llave \cite[Theorem 6.3]{dela} constructed a counterexample on  $\TT^d (d\geq 4)$, precisely, for any $k\in\NN$ there exist hyperbolic automorphism $A:\TT^d\to\TT^d$, Anosov diffeomorphism $f:\TT^d\to\TT^d$ and a $C^k$-conjugacy $h$ between $f$ with $A$ such that $h$ is not $C^{k+1}$-smooth.
	
	As a corollary of \cite{FK91},  $C^2$-regularity of hyperbolic foliation on $\TT^2$ implies higher-regularity of itself. In a same sense of such bootstrap of foliation, Katok and Hurder \cite{katokhurder} proved that for a  $C^r(r\geqslant5)$ volume-preserving Anosov diffeomorphism $f$ of $\TT^2$, if distributions $E_f^{s\setminus u}$  are $C^{1,\omega}$, i.e, the derivatives are are respectively of class $\omega(s)=o(s|{\rm log}(s)|)$,   then $\mathcal{F}^{s/u}_f$ are actually $C^{r-3}$-smooth and $f$ is $C^{r-3}$-conjugate to a toral hyperbolic automorphism. Similarly, Ghys \cite{Ghys93} showed that  for a $C^r \ (r\geq 2)$ Anosov diffeomorphism $f$ of $\TT^2$, if the stable foliation $\mathcal{F}^s_f$ is $C^{1+{\rm Lip}}$-smooth, then it is actually $C^r$-smooth.

	Our aim in this paper is getting higher regularity of codimension-one hyperbolic foliations  under the assumption of more or less $C^2$-smoothness, see Theorem \ref{thm2} and Theorem \ref{thm1}. In particular, we get some rigidity results on $\TT^2$. Let us give two notations.  We denote by $\lambda_f^u(x)$ the sum of Lyapunov exponents (if it exists) of $f$ on the unstable subbundle at the point $x$, namely,
	\begin{align*}
		\lambda^u_f(x)=\lim_{n\to +\infty} \frac{1}{n}\mathrm{log}\Big|{\rm det}\big(Df^n \arrowvert_{E^u_f(x)}\big)\Big|.
	\end{align*}
	For $r>1$, let  $r_*=\Big\{ \begin{array}{lr}
		r-1+{\rm Lip}, \; \  \ \qquad r\in\NN  \\ \;\;\; r,\ \ \ \ \ \ \  r\notin  \NN \ {\rm or}\ r=+\infty
	\end{array}$.

	\begin{theorem}\label{thm2}
		Let  $f:\TT^d\to\TT^d\ (d\geq 2)$ be a $C^r\ (r>2)$ Anosov diffeomorphism with the $(d-1)$-dimension $C^{2+\e}$-smooth ($\e>0$) stable foliation $\mathcal{F}_f^s$. Then $\mathcal{F}_f^s$ is $C^{r_*}$-smooth  and $\lambda^u_f(p)\equiv \lambda^u_A$, for all periodic point $p$ of $f$, where $A$ is the linearization of $f$.
	\end{theorem}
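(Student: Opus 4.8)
The plan is to first reduce to a concrete normal form and then treat the two conclusions by a common transverse mechanism, since both ultimately concern the one-dimensional unstable direction. As $\mathcal{F}^s_f$ is codimension one, the bundle $E^u_f$ is one-dimensional, so by the Franks--Manning theory recalled above $f$ is topologically conjugate via some $h$ to its linearization $A$, a hyperbolic automorphism of $\TT^d$ whose only eigenvalue off the unit circle is a single real $\lambda$ with $|\lambda|>1$; thus $\lambda^u_A=\log|\lambda|=h_{\mathrm{top}}(A)=h_{\mathrm{top}}(f)$. From the $C^{2+\e}$ foliation I record that its tangent distribution $E^s_f$ is $C^{1+\e}$, that the stable holonomies between unstable leaves inherit this regularity, and I fix the one-dimensional non-stationary linearization charts $\{\Phi_x\}$ along the unstable leaves (no resonances occur in dimension one, so these are as smooth as the bunching permits), in which $f$ acts leafwise as multiplication by $Df|_{E^u_f(x)}$.

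For the regularity statement I would show that $E^s_f$, equivalently its holonomy, is smooth separately along the two foliations and then glue. Along $\mathcal{F}^s_f$ the distribution is automatically as smooth as the $C^r$ leaves. Along $\mathcal{F}^u_f$ I would run a bootstrap: writing $E^s_f$ as a graph and using the invariance $Df(E^s_f)=E^s_f$, one sets up the graph-transform / cohomological equation for the section along the expanding one-dimensional unstable direction, where the a priori $C^{2+\e}$ control supplies the domination needed for the associated fiber contraction, upgrading the transverse regularity from $C^{1+\e}$ step by step. Once $E^s_f$ is uniformly $C^{k}$ along both $\mathcal{F}^s_f$ and $\mathcal{F}^u_f$ with matching estimates, a Journ\'e-type regularity lemma yields joint smoothness, and iterating to the limit permitted by $f\in C^r$ produces exactly $C^{r_*}$, the loss occurring only at integer $r$ as usual.

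For the periodic data I would exploit the commutation of the stable holonomy with $f$ in the linearization charts: for $y\in W^s(x)$ the chart-holonomy $\widehat H_{x\to y}=\Phi_y\circ H^s_{x\to y}\circ\Phi_x^{-1}$ fixes the origin and satisfies $\widehat H_{fx\to fy}\big(Df|_{E^u_f(x)}\,t\big)=Df|_{E^u_f(y)}\,\widehat H_{x\to y}(t)$. Because $f^n x$ and $f^n y$ converge along the stable leaf, the ratio of unstable derivatives tends to one, so the smoothness of $\widehat H$ forces $\psi_f:=\log|Df|_{E^u_f}|$ to differ from a constant by a coboundary; equivalently the periodic averages of $\psi_f$ all equal one constant $c$. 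Combining Ruelle's inequality with Pesin's formula for the SRB measure pins this constant to $h_{\mathrm{top}}(f)$, whence $c=\log|\lambda|=\lambda^u_A$; since for periodic $p$ of period $n$ one has $\lambda^u_f(p)=\frac1n\sum_{i=0}^{n-1}\psi_f(f^i p)=c$, the identity $\lambda^u_f(p)\equiv\lambda^u_A$ follows.

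The main obstacle I expect is the transverse bootstrap from the borderline $C^{2+\e}$ hypothesis: controlling the one-dimensional nonlinearity of the holonomies (the second-order transverse term) well enough to guarantee the bunching that makes the fiber contraction converge, and keeping the estimates along $\mathcal{F}^s_f$ and $\mathcal{F}^u_f$ uniform so that Journ\'e's lemma applies at each stage. The threshold $2+\e$ is precisely what tames this second-order term in the $(d-1)$-dimensional stable direction, which is why the weaker $C^{1+\mathrm{Lip}}$ that suffices on $\TT^2$ does not suffice here; verifying these uniform estimates, rather than the formal cohomological computation, is the crux.
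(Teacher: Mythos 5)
Your proposal takes a genuinely different route from the paper, but both of its main steps have real gaps. For the regularity claim you propose a graph-transform/fiber-contraction bootstrap of $E^s_f$ in the unstable direction, asserting that ``the a priori $C^{2+\e}$ control supplies the domination needed for the associated fiber contraction.'' It does not: the convergence of such a fiber contraction at transverse regularity $k$ is governed by bunching inequalities between $\|Df|_{E^s_f}\|$ and $Df|_{E^u_f}$, i.e.\ by the rates of the dynamics, and these are not improved by knowing a priori that the invariant section happens to be smooth. For a general codimension-one Anosov diffeomorphism the relevant bunching already fails at $k=2$ --- this is exactly why $E^s_f$ is $C^1$ but in general not $C^2$ even for analytic $f$ --- so the bootstrap stalls and no amount of a priori information about the section restores the contraction. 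For the periodic data, the commutation relation you write for the chart-holonomies, differentiated at the origin, only says that $\psi_f(x)-\psi_f(y)$ is a coboundary \emph{along stable leaves}, i.e.\ that $\sum_{n\geq 0}\bigl(\psi_f(f^nx)-\psi_f(f^ny)\bigr)$ converges for $y\in\mathcal{F}^s_f(x)$; this holds for every H\"older cocycle over a hyperbolic system, carries no rigidity, and does not imply that $\psi_f$ is cohomologous to a constant. (The subsequent Ruelle--Pesin identification of the constant with $h_{\mathrm{top}}(f)=\lambda^u_A$ would be fine if that step held.)

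What is missing is the paper's key global, arithmetic input. The paper reduces the codimension-one foliation to a circle diffeomorphism: the $\ZZ^d$-action $T_f^n(x)=\tildeF^u_f(0)\cap\tildeF^s_f(x+n)$ on a single unstable leaf descends to a $C^{2+\e}$ circle diffeomorphism $\overline{T_f}$, conjugate via the Franks--Manning conjugacy $H$ to a rigid rotation $\overline{R_\alpha}$ whose angle $\alpha$ is an algebraic irrational determined by the eigendata of $A$. The KAM-type Theorem \ref{thm5} then makes the circle conjugacy absolutely continuous, hence $H$ is absolutely continuous along unstable leaves; de la Llave's Lemma \ref{lem AC to Cr} upgrades this to uniform $C^r$ smoothness of $H$ along unstables, which yields both the constant periodic data and, via Lemma \ref{lem conjugacy to holonomy} and the Journ\'e argument of Lemma \ref{lem holonomy to foliation}, the $C^{r_*}$ regularity of $\mathcal{F}^s_f$. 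Your argument contains no substitute for this Diophantine mechanism, and I do not see how to close either gap by purely local estimates.
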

	
	\begin{remark}
		Here we briefly explain why we just get $C^{r_*}$-smoothness. In this paper, the regularity of foliation is given by foliation chart, see Section 2 for precise definition.  Instead of the regularity of local chart, we will first prove that the foliation has $C^r$-smooth holonomy.  However, the regularity of foliation may be lower than its holonomy, e.g., see \cite[Section 6]{PSW}.
	\end{remark}

	In particular, we have the following corollary linking the regularity of foliation with Lyapunov exponents of its transversal.
	\begin{corollary}\label{cor1}
		Let $f$ be a $C^r(r>2)$ Anosov diffeomorphism of \,$\mathbb{T}^d\ (d\geq2)$ with the $(d-1)$-dimension stable foliation $\mathcal{F}_f^s$ and linearization $A:\TT^d\to \TT^d$. Then the followings are equivalent:
		\begin{enumerate}
			\item There exists small $\e>0$ such that $\mathcal{F}_f^s$ is $C^{2+\e}$-smooth;
			\item For all periodic point $p$ of $f$, $\lambda^u_f(p)\equiv \lambda^u_A$;
			\item The foliation $\mathcal{F}_f^s$ is $C^{r_*}$-smooth.
		\end{enumerate}
	\end{corollary}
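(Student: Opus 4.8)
The plan is to run the cycle of implications using Theorem \ref{thm2} for the two implications leaving $(1)$ and reducing everything else to a Liv\v sic cohomology computation. The implications $(1)\Rightarrow(2)$ and $(1)\Rightarrow(3)$ are exactly the two conclusions of Theorem \ref{thm2}, so no new argument is needed there. The implication $(3)\Rightarrow(1)$ is immediate from the definition of $r_*$: since $r>2$ one always has $r_*>2$ (indeed $r_*\geq 2+{\rm Lip}$ in the integer case and $r_*=r>2$ otherwise), and $C^{2+{\rm Lip}}\subset C^{2+\e}$ for every small $\e\in(0,1)$; hence $C^{r_*}$-smoothness of $\mathcal{F}_f^s$ yields $(1)$. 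The whole substance of the corollary therefore lies in $(2)\Rightarrow(1)$, after which Theorem \ref{thm2} bootstraps $(1)$ to $(3)$ and closes the cycle.

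For $(2)\Rightarrow(1)$ I would argue as follows. Because the stable foliation is codimension one, the unstable bundle $E^u_f$ is one-dimensional, so $\lambda^u_f$ is governed by the single scalar cocycle $a(x)=\log\big|Df|_{E^u_f}(x)\big|$, and $\lambda^u_f(p)$ is precisely the period-average of $a$ along the orbit of $p$. Condition $(2)$ says exactly that every periodic orbit average of $a$ equals the constant $\lambda^u_A$. Since $f$ is a transitive Anosov diffeomorphism and $a$ is H\"older (as $Df$ is $C^{r-1}$ and $E^u_f$ is H\"older), the Liv\v sic theorem provides a continuous transfer function $\phi$ solving $a-\lambda^u_A=\phi\circ f-\phi$; moreover Liv\v sic regularity gives that $\phi$ is smooth along stable and unstable leaves with leafwise regularity inherited from $a$ (which is $C^{r-1}$ along the $C^r$ unstable leaves), and a Journ\'e-type lemma upgrades this to the corresponding joint transverse regularity.

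The coboundary structure is what makes the stable holonomy smooth. Fix nearby points $x$ and $y=\pi^s(x)$ on a common local stable leaf, where $\pi^s$ denotes the stable holonomy between two local unstable transversals. Differentiating along the one-dimensional unstable direction, the logarithmic derivative of $\pi^s$ is the convergent series $\sum_{n\geq 0}\big[a(f^n x)-a(f^n y)\big]$, since forward iteration contracts the stable leaf and the summands decay exponentially. Substituting $a=\lambda^u_A+\phi\circ f-\phi$ makes the series telescope, and because ${\rm dist}(f^n x,f^n y)\to 0$ with $\phi$ continuous, the boundary term vanishes, leaving
\begin{equation}
	\log (\pi^s)'(x)=\phi(y)-\phi(x)=\phi(\pi^s(x))-\phi(x).
\end{equation}
Thus the derivative of the stable holonomy is $\exp\!\big(\phi\circ\pi^s-\phi\big)$, so the holonomy is exactly as regular as $\phi$.

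Finally I would read off the regularity from this identity: the right-hand side is controlled by $\phi$, smooth along unstable leaves and jointly H\"older by the previous step, and a standard self-improving bootstrap (feeding the regularity of $\pi^s$ back into $\phi\circ\pi^s$) raises the holonomy past $C^{2+\e}$; this is $(1)$, and Theorem \ref{thm2} then delivers $(3)$. The hard part is precisely this regularity accounting: the transverse smoothness of $a$, hence of $\phi$, is a priori limited by the merely H\"older regularity of the one-dimensional bundle $E^u_f$, so obtaining genuine $C^{2+\e}$ holonomy (rather than only $C^{1+\theta}$ for some H\"older exponent $\theta$) requires combining the leafwise $C^{r-1}$ smoothness of $\phi$ with the telescoped formula and a careful Journ\'e-lemma argument to control the joint regularity.
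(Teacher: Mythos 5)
Your skeleton matches the paper's: $(1)\Rightarrow(2)$ and the bootstrap to $(3)$ come from Theorem \ref{thm2}, and $(3)\Rightarrow(1)$ is trivial. The difference is in the remaining arrow. The paper closes the cycle with $(2)\Rightarrow(3)$ by simply citing case 2 of Lemma \ref{lem AC to Cr} (de la Llave: equal periodic unstable exponents imply $H|_{\tildeF^u_f}$ is uniformly $C^r$) and then feeding this into Lemma \ref{lem conjugacy to holonomy} and Lemma \ref{lem holonomy to foliation}. You instead try to re-derive this from scratch via Liv\v sic, and that is where the gaps are.

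Concretely: your telescoped formula $\log(\pi^s)'=\phi\circ\pi^s-\phi$ is correct and is essentially the content of de la Llave's lemma, but two of your supporting claims are wrong. First, the transfer function $\phi$ is \emph{not} smooth along stable leaves: $a=\log|Df|_{E^u_f}|$ restricted to a stable leaf sees $E^u_f$ transversally, and $E^u_f$ is a priori only H\"older, so there is no Journ\'e argument giving joint transverse regularity of $\phi$ --- and none is needed. The bootstrap is purely leafwise: $\phi$ is $C^{r-1}$ along each unstable leaf, and iterating $D\mathrm{Hol}^s=\exp(\phi\circ\mathrm{Hol}^s-\phi)$ upgrades $\mathrm{Hol}^s$ leaf by leaf. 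You defer exactly this step as ``the hard part,'' so the only nontrivial implication is left incomplete; the paper avoids it by quoting \cite{dela}. Second, you conflate holonomy regularity with foliation regularity: statements $(1)$ and $(3)$ are about foliation charts, and Journ\'e's lemma enters precisely (and only) in passing from uniformly $C^r$ holonomies to a $C^{r_*}$ atlas, i.e.\ Lemma \ref{lem holonomy to foliation}(1) --- which also explains why the conclusion is $C^{r_*}$ rather than $C^r$; your sketch never invokes this step, and places Journ\'e in the wrong part of the argument. With these two points repaired (or replaced by the citation of Lemma \ref{lem AC to Cr}), your route coincides with the paper's.
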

	

	\begin{remark}
		By the same way of proving `` $2\Longrightarrow 3$" in Corollary \ref{cor1}, one can get an interesting result for non-invertible Anosov maps, that is their  codimension-one unstable foliations are always smooth as  nearly same regularity as  maps. Concretely, for a $C^r\ (r>1)$ non-invertible Anosov endomorphism $f:\TT^d\to\TT^d$   with one-dimensional stable bundle, if there exists unstable foliation $\mathcal{F}_f^u$ of $f$, then $\mathcal{F}_f^u$ is  $C^{r_*}$-smooth. Indeed, by \cite{AGGS23}, the existence of  $\mathcal{F}_f^u$ implies $\lambda_f^s(p)=\lambda_A^s$ for all $p\in{\rm Per}(f)$. Then the proof of Corollary \ref{cor1} leads to $C^{r_*}$ regularity of $\mathcal{F}_f^u$.
	\end{remark}

	We mention in advance that our method to prove Theorem \ref{thm2} is different from one of \cite{FK91,katokhurder,Ghys93}. Indeed, we will
	consider a diffeomorphism of circle $S^1$   induced by codimension-one foliation and apply KAM theory (see Theorem \ref{thm5}) to it. Hence the regularity $C^{2+\e}$ of foliation is in fact a condition of the induced circle diffeomorphism for using KAM. Particularly,  when $\TT^d$  is restricted to be $\TT^2$, we can lower the regularity of our assumption to be $C^{1+{\rm AC}}$, i.e, the derivative of foliation charts are absolutely continuous. 
	
	\begin{theorem}\label{thm1}
		Let $f:\TT^2\to\TT^2$ be a $C^r(r\geqslant2)$ Anosov diffeomorphism  with the $C^{1+{\rm AC}}$-smooth stable foliation $\mathcal{F}_f^s$. Then $\mathcal{F}_f^s$ is $C^{r_*}$-smooth and $\lambda^u_f(p)\equiv \lambda^u_A$, for all periodic point $p$ of $f$, where  $A$ is the linearization of $f$.	
	\end{theorem}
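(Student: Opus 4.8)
The plan is to reduce the regularity question for this two--dimensional foliation to a linearization problem for a single circle diffeomorphism, exactly as announced before the statement. Let $A$ be the linearization of $f$; by the Franks--Manning theory there is a topological conjugacy $h$ with $h\circ f=A\circ h$ carrying $\mathcal{F}_f^s$ to the linear stable foliation $\mathcal{F}_A^s$, whose leaves are parallel lines of irrational slope given by the stable eigendirection of $A$. Fix a segment $\tau$ of an unstable leaf of $f$ as a transversal to $\mathcal{F}_f^s$. Because the leaves of $\mathcal{F}_f^s$ are dense, the first--return holonomy of $\mathcal{F}_f^s$ along $\tau$ defines an orientation--preserving circle diffeomorphism $g\colon S^1\to S^1$ whose regularity is that of the foliation charts, namely $C^{1+{\rm AC}}$, and whose rotation number $\alpha$ is the slope datum of $\mathcal{F}_A^s$. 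Since $A\in GL(2,\ZZ)$ is hyperbolic, $\alpha$ is a quadratic irrational, hence of bounded type and in particular Diophantine --- precisely the arithmetic input required by KAM.

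The key extra structure is a renormalization coming from $f$ itself. Since $f$ preserves $\mathcal{F}_f^s$ and the unstable foliation and expands uniformly along $\tau$, it conjugates the holonomy $g$ to itself on a rescaled transversal, the rescaling factor being governed by the unstable expansion of $f$; this yields a functional equation relating $g$ on small scales to $g$ on large scales. This self--similarity supplies the a priori bounds that are otherwise unavailable at the borderline regularity $C^{1+{\rm AC}}$. Feeding $g$, its Diophantine rotation number, and this renormalization relation into the KAM Theorem \ref{thm5}, I would conclude that $g$ is smoothly conjugate to the rotation $R_\alpha$ and that the conjugacy --- hence the holonomy of $\mathcal{F}_f^s$ --- is $C^{r_*}$. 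The smooth trivialization of $g$ also produces a smooth solution $u$ of the cohomological equation $\log\big|Df|_{E^u_f}\big|=\lambda^u_A+u\circ f-u$; summing this identity over any periodic orbit, the coboundary telescopes and yields $\lambda^u_f(p)\equiv\lambda^u_A$ for every periodic point $p$, which is the second assertion of the theorem.

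It remains to promote $C^{r_*}$ holonomy to $C^{r_*}$ smoothness of the foliation and to close the loop with the regularity of $f$. Once $\lambda^u_f(p)\equiv\lambda^u_A$ on all periodic orbits, the Liv\v{s}ic theorem applies to the H\"older cocycle $\log\big|Df|_{E^u_f}\big|-\lambda^u_A$ and produces a transfer function; combining its regularity along the stable and unstable leaves through a Journ\'e--type argument upgrades the stable holonomy to the full regularity $C^{r_*}$ compatible with $r$. This is the same mechanism as the implication ``$2\Longrightarrow 3$'' of Corollary \ref{cor1}, and it is where the definition of $r_*$ enters, distinguishing the integer and non--integer cases for $r$.

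The main obstacle I anticipate is the KAM step at the precise regularity $C^{1+{\rm AC}}$: this lies below the classical $C^2$ threshold of Herman--Yoccoz theory, so the linearization of $g$ cannot follow from smallness or from Denjoy--type distortion estimates alone. The argument must instead exploit the renormalization identity to generate the required distortion control and a priori estimates, and then invoke the sharp form of KAM in Theorem \ref{thm5}. Verifying that absolute continuity of the derivative --- rather than merely bounded variation or $C^{1+\mathrm{Lip}}$ --- is exactly what makes the relevant series summable is the delicate technical heart of the proof.
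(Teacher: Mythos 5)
Your overall skeleton --- turn the codimension-one stable holonomy into a circle diffeomorphism with quadratic irrational rotation number and feed it to Theorem \ref{thm5} --- is the paper's route (the paper builds the circle map from the $\ZZ^2$-action $T_f^{n}(x)=\tildeF_f^u(0)\cap\tildeF_f^s(x+n)$ and quotients the unstable leaf by $T_f^{e_2}$, rather than using a first-return map, but that difference is cosmetic). The genuine gap is in your KAM step. At regularity $C^{1+{\rm AC}}$, the relevant case of Theorem \ref{thm5} (the Katznelson--Ornstein condition) yields only that the conjugacy to $\overline{R_\alpha}$ is \emph{absolutely continuous}, not smooth; and to invoke it one must verify $T''/T'\in L^p$, which the paper does by noting that both $T_f$ and $T_f^{-1}$ are $C^{1+{\rm AC}}$, so $T''/T'\in L^2$ --- a verification absent from your proposal. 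Your substitute --- a ``renormalization identity'' from $f$ conjugating the holonomy to itself on rescaled transversals, supposedly supplying a priori bounds that force a smooth linearization --- is not developed, is not an input that Theorem \ref{thm5} accepts, and no mechanism is given by which it would upgrade the conclusion from absolute continuity to $C^1$ or better. As written, the assertion that ``$g$ is smoothly conjugate to $R_\alpha$'' does not follow from anything you have established.

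The missing idea is the bootstrap the paper actually uses. Since the rotation number is irrational, the circle conjugacy is unique up to rotation, so the absolutely continuous conjugacy furnished by Theorem \ref{thm5} is (the projection of) $H$ restricted to an unstable leaf; then Lemma \ref{lem AC to Cr} (de la Llave) converts absolute continuity of $H^{\pm1}$ along the one-dimensional unstable foliations into uniform $C^r$ smoothness of $H|_{\tildeF^u_f}$ --- the upgrade comes from the hyperbolic dynamics of $f$, not from circle renormalization. From there Lemmas \ref{lem conjugacy to holonomy} and \ref{lem holonomy to foliation} (Journ\'e) give the $C^{r_*}$ foliation, and $\lambda^u_f(p)\equiv\lambda^u_A$ is immediate from smoothness of $h$ along unstable leaves, with no separate Liv\v{s}ic or cohomological-equation argument needed. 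Your final paragraphs essentially rediscover pieces of this chain, but they hang on the unproven smooth-KAM claim rather than on the absolute-continuity conclusion that is actually available.
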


	By combining our result and a rigidity result of R. de la Llave \cite{dela} which says that constant periodic Lyapunov exponents  implies smooth conjugacy on $\mathbb{T}^2$, we have following two direct corollaries.
	
	\begin{corollary}
		Let $f$ be a $C^r(r\geqslant2)$ Anosov diffeomorphism of\, $\mathbb{T}^2$. If the stable and unstable foliations of $f$ are both $C^{1+{\rm AC}}$, then $f$ is $C^{r_*}$ conjugate to its linearization. In particular, $f$ preserves a smooth volume-measure.
	\end{corollary}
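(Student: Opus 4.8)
The plan is to deduce the statement directly from Theorem \ref{thm1} (applied to both foliations) together with the cited rigidity theorem of de la Llave. First I would record the symmetry that the unstable foliation $\mathcal{F}^u_f$ of $f$ is exactly the stable foliation $\mathcal{F}^s_{f^{-1}}$ of the inverse map, and that $f^{-1}$ is again a $C^r$ Anosov diffeomorphism of $\TT^2$ whose linearization is $A^{-1}$. Hence the hypothesis that both $\mathcal{F}^s_f$ and $\mathcal{F}^u_f$ are $C^{1+{\rm AC}}$-smooth is precisely the hypothesis of Theorem \ref{thm1} for $f$ and for $f^{-1}$ simultaneously, and I would apply that theorem twice.

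Applying Theorem \ref{thm1} to $f$ gives $\lambda^u_f(p)\equiv\lambda^u_A$ for every periodic point $p$. Applying it to $f^{-1}$ gives $\lambda^u_{f^{-1}}(p)\equiv\lambda^u_{A^{-1}}$; since the unstable bundle of $f^{-1}$ is the stable bundle of $f$ and $\lambda^u_{f^{-1}}(p)=-\lambda^s_f(p)$, $\lambda^u_{A^{-1}}=-\lambda^s_A$, this translates into $\lambda^s_f(p)\equiv\lambda^s_A$ for every periodic point $p$. Thus \emph{both} the stable and the unstable periodic Lyapunov exponents of $f$ coincide with those of its linearization $A$. This matching of the full periodic data on $\TT^2$ is exactly what de la Llave's theorem \cite{dela} requires, and it yields a conjugacy $h$ with $h\circ f=A\circ h$ of class $C^{r_*}$. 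I would emphasize that needing both foliations smooth is essential here: a single smooth foliation only forces one of the two exponent relations, whereas de la Llave's rigidity consumes the whole periodic spectrum.

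For the final assertion I would transport the invariant volume through $h$. The linear automorphism $A$ preserves the Haar (Lebesgue) measure on $\TT^2$; since $h$ is a $C^{r_*}$ diffeomorphism with $r_*>1$, the pullback $h^{*}{\rm Leb}$ is an $f$-invariant measure whose density is $|{\rm det}\,Dh|$, a positive function of class $C^{r_*-1}$. Hence $f$ preserves a smooth volume measure. I do not expect a substantial new obstacle in this corollary, as the analytic content is carried entirely by Theorem \ref{thm1} and by \cite{dela}; the only points requiring care are the sign and linearization bookkeeping in the passage to $f^{-1}$, and checking that the regularity $C^{r_*}$ furnished by de la Llave's theorem is consistent with the regularity of the foliations obtained from Theorem \ref{thm1}.
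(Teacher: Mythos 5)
Your proposal is correct and matches the paper's (implicit) argument: the paper derives this corollary exactly by combining Theorem \ref{thm1}, applied to both hyperbolic foliations, with de la Llave's rigidity theorem that matching periodic Lyapunov data on $\TT^2$ yields a smooth conjugacy, and the volume statement follows by pulling back Lebesgue measure through that conjugacy. The bookkeeping via $f^{-1}$ is the standard way to make the symmetry precise and introduces no new difficulty.
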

	
	\begin{corollary}
		Let $f$ be a $C^r(r\geqslant2)$ volume-preserving Anosov diffeomorphism of\, $\mathbb{T}^2$. If the stable foliation of $f$ is $C^{1+{\rm AC}}$, then $f$ is $C^{r_*}$ conjugate to its linearization.
	\end{corollary}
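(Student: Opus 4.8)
The plan is to reduce this statement to Theorem \ref{thm1} together with the cited rigidity theorem of de la Llave, using volume preservation as the bridge between the two. First I would apply Theorem \ref{thm1} directly: since $f$ is $C^r$ ($r\ge 2$) with a $C^{1+\mathrm{AC}}$ stable foliation $\mathcal{F}_f^s$, the theorem gives both that $\mathcal{F}_f^s$ is $C^{r_*}$-smooth and, crucially, that the unstable periodic data is rigid, namely $\lambda^u_f(p)\equiv\lambda^u_A$ for every periodic point $p$ of $f$, where $A$ is the linearization. This already pins down one of the two one-dimensional Lyapunov exponents along every periodic orbit.

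The key observation is that volume preservation converts this one-sided information into full periodic data. Since $f$ preserves the smooth volume on $\TT^2$, its Jacobian has absolute value $1$ everywhere, so at any periodic point $p$ of period $n$ one has $\lambda^s_f(p)+\lambda^u_f(p)=\tfrac1n\log\bigl|\det Df^n(p)\bigr|=0$. On the other hand, the linearization $A\in GL(2,\ZZ)$ is hyperbolic with $|\det A|=1$, so its two exponents satisfy $\lambda^s_A+\lambda^u_A=\log|\det A|=0$ as well. Combining these two identities with $\lambda^u_f(p)=\lambda^u_A$ yields $\lambda^s_f(p)=-\lambda^u_f(p)=-\lambda^u_A=\lambda^s_A$. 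Hence both the stable and the unstable periodic Lyapunov exponents of $f$ coincide with those of $A$ along every periodic orbit; since the stable and unstable bundles on $\TT^2$ are one-dimensional, this is exactly the matching of the full periodic data of $f$ with that of $A$.

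With the periodic data of $f$ equal to that of its linearization, I would then invoke the rigidity result of de la Llave \cite{dela}: on $\TT^2$, constant periodic Lyapunov exponents equal to those of the linear model force the topological conjugacy $h$ between $f$ and $A$—which exists by the Franks--Manning theorem—to be $C^{r_*}$-smooth. This gives the desired $C^{r_*}$ conjugacy and completes the proof.

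I do not expect a serious obstacle here: the corollary is essentially a synthesis of Theorem \ref{thm1} and a known theorem. The only genuinely substantive point is the realization that the single-sided Lyapunov rigidity provided by Theorem \ref{thm1} is upgraded to two-sided rigidity precisely by volume preservation, which is what allows de la Llave's theorem to apply without any separate hypothesis on the unstable foliation. Some care would also be needed to confirm that the regularity output of \cite{dela} matches the exponent $r_*$ claimed in the statement, but this is exactly the form in which the cited result is already being used in the preceding corollary.
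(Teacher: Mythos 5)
Your proposal is correct and matches the paper's intended argument: the paper states this as a direct corollary of Theorem \ref{thm1} combined with de la Llave's periodic-data rigidity theorem, with volume preservation supplying exactly the step you make explicit (that $\lambda^s_f(p)+\lambda^u_f(p)=0=\lambda^s_A+\lambda^u_A$ upgrades the unstable periodic data to full periodic data). Your write-up simply fills in the details the paper leaves implicit.
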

	
	\section{Preliminaries}
	
	As usual, a foliation $\mathcal{F}$ with dimension $l$ of a closed  Riemannian manifold $M=M^d$ is $C^r$-smooth, if there exists a set of $C^r$ local charts $\{(\phi_i, U_i)\}_{i=1}^k$ of $M$ such that $\phi_i:D^l\times D^{d-l}\to U_i$ satisfying \[\phi_i\big(D^l\times \{y\}\big) \subset \mathcal{F}\big(\phi_i(0,y)\big), \quad \forall y\in D^{d-l},\] where $D^{l}$ and  $D^{d-l}$ are open disks with dimension $l$ and  $d-l$ respectively and the chart $(\phi_i, U_i)$ is called a $C^r$ \textit{foliation chart}.
	
	Let $f$ be a $C^r$-smooth Anosov diffeomorphism  of $d$-torus $\TT^d$, i.e., there is a $Df$-invariant  splitting 
	$$T\mathbb{T}^d=E^s_f\oplus E^u_f,$$ and constants $C,\lambda>1$, such that for all $n>0$, $x\in\TT^d$ and $v^{s/u}\in E^{s/u}_f$
	\[\| D_xf^n(v^s)  \| \le C\lambda^{-n}\|v^s\|, \quad {\rm and} \quad  \| D_xf^n(v^u)  \| \geq C\lambda^{n}\|v^u\|.\]
	Note that $f_*:\pi_1(\TT^d)\to \pi_1(\TT^d)$ also induces a  hyperbolic automorphism $A:\TT^d\to \TT^d$\cite{Franks1970} which is called the \textit{linearization} of $f$. Denote  the $A$-invariant hyperbolic splitting by \[T\mathbb{T}^d=E^s_A\oplus E^u_A.\]
   Let $i\in \{f,A\}$.  Denote the foliations tangent to $E_i^s$ and  $E_i^u$ by $\mathcal{F}_i^s$ and $\mathcal{F}_i^u$ respectively. It is known that the leaf $\mathcal{F}_f^\sigma(x) \ (\sigma=s,u)$ is an immersed $C^r$ submanifold of $\mathbb{T}^d$. 	 
   
   Since $f$ and $A$ are always conjugate \cite{Franks1970}, we denote the conjugacy by  $h:\mathbb{T}^d\to\TT^d$, namely, $h$ is a homeomorphism such that
   \begin{align*}
   	h\circ f=A\circ h
   \end{align*}
   By the topological character of (un)stable foliation, i.e.,
   \begin{align*}
   	\mathcal{F}_f^s(x)=\big\{y\in\TT^d:d\big(f^{n}(x),f^{n}(y)\big)\to 0 , n\to +\infty \big\},
   \end{align*}
   $h$ preserves the foliations, that is for all $x\in\TT^d$,
   \begin{align*}
   	h\big(\mathcal{F}_f^u(x)\big)=\mathcal{F}_A^u\big(h(x)\big) ,\quad h\big(\mathcal{F}_f^s(x)\big)=\mathcal{F}_A^s\big(h(x)\big) . 
   \end{align*}

   It is convenient to observe the foliations on the universal cover $\RR^d$. Let $\pi:\RR^d\to \TT^d$ be the natural projection. Denote by $F, A$ and $H:\RR^d\to \RR^d$ the lifts of $f,A$ and $h:\TT^d\to \TT^d$ respectively.  For convenience, we can assume that $H(0)=0$. We denote the lift of $\mathcal{F}^{\sigma}_{i} \ (\sigma=s/u, i=f/A)$ on $\RR^d$ by $\tildeF^{\sigma}_{i}$ which are also the stable/unstable foliation of the lift $F/A$.  Recall that $H\big(\tildeF^{\sigma}_f\big)=\tildeF^{\sigma}_A$, $\sigma=s/u$ and hence $\tildeF^{s}_f$ and $\tildeF^{u}_f$ admit the  Global Product Structure just like $\tildeF^{s}_A$ and $\tildeF^{u}_A$, i.e., each pair of leaves $\tildeF^{s}_f(x)$ and $\tildeF^{u}_f(y)$ transversally intersects at exactly one point. Then we can define the holonomy map Hol$^s_i\ (i=f/A)$ induced by the foliation $\tildeF^{s}_i$ as 
   \[ {\rm Hol}^s_{i,x,y}:\tildeF^u_i(x)\to \tildeF^u_i(y), \quad {\rm Hol}^s_{i,x,y}(z)=\tildeF^s_i(z)\cap \tildeF^u_i(y). \]
   Note that the holonomy map Hol$^s_f$ and foliation $\tildeF^s_f$ are both absolutely continuous\cite{Pesin04}. 
   As mentioned before, the regularity of foliation may be lower than one of its holonomy. However, we still have the following lemma. We refer to  \cite[Section 6 ]{PSW} for more details about the next lemma and also the counterexample of foliations whose regularity is strictly lower than the holonomy. 
   
   \begin{lemma}[\cite{PSW}]\label{lem holonomy to foliation}
   Let $f:\TT^d\to \TT^d$ be a $C^r$-smooth $(r\geq 1)$ Anosov diffeomorphisms. Then
   \begin{enumerate}
   	\item If the holonomy maps ${\rm Hol}^s_f$ of $\tildeF^s_f$ are uniformly $C^r$-smooth, i.e., for any $x,y,z\in\RR^d$ the holonomy map ${\rm Hol}^s_{f,x,y}$ is $C^r$ and its derivatives (with respect to $z$) of order $\leq r$  vary continuously  with respect to $(x,y,z)$. Then the foliation $\mathcal{F}^s_f$ are $C^{r_*}$-smooth. 
   	\item If $\mathcal{F}^s_f$ is a $C^k$-smooth  ($k\leq r$) foliation, then the holonomy maps ${\rm Hol}^s_f$ of $\tildeF^s_f$ are uniformly $C^k$-smooth.
   \end{enumerate} 
   \end{lemma}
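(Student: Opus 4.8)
The plan is to prove the two implications separately, treating (2) as a direct local computation and (1) as the substantive half where the passage from $r$ to $r_*$ occurs. For (2), fix a finite atlas of $C^k$ foliation charts $\phi_i\colon D^l\times D^{d-l}\to U_i$ for $\mathcal{F}^s_f$, so that in each chart the leaves are the horizontal slices and the leaf-space projection $\pi_i=\mathrm{pr}_2\circ\phi_i^{-1}$ is $C^k$. The unstable leaves $\tildeF^u_f(x)$ are $C^r$ immersed submanifolds, hence $C^k$ submanifolds transverse to the slices, so inside each chart $\pi_i$ restricts to a $C^k$ diffeomorphism of $\tildeF^u_f(x)\cap U_i$ onto its image. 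Since the stable holonomy sends $z\in\tildeF^u_f(x)$ to the unique point of $\tildeF^u_f(y)$ lying on the same slice, i.e. having the same $\pi_i$-value, ${\rm Hol}^s_{f,x,y}$ is locally the composition of the $C^k$ map $\pi_i|_{\tildeF^u_f(x)}$ with the $C^k$ inverse of $\pi_i|_{\tildeF^u_f(y)}$, hence $C^k$; for $x,y$ far apart one composes finitely many such local pieces along a chain of overlapping charts. Uniform continuity of the derivatives in $(x,y,z)$ then follows from compactness of $\TT^d$ together with the continuous $C^r$-dependence of the stable and unstable leaves on their base points.

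For (1), the idea is to encode the foliation in a single transverse coordinate and invoke a regularity lemma of Journ\'e. Working on the universal cover near a point $p$, parametrize the central unstable leaf by a $C^r$ chart $\tau\colon D^{d-l}\to\tildeF^u_f(p)$ with $\tau(0)=p$, and define $\pi^s$ on a neighborhood $U$ by declaring $\pi^s(q)=y$ whenever $q\in\tildeF^s_f(\tau(y))$; concretely $\pi^s(q)=\tau^{-1}\bigl({\rm Hol}^s_{f,z,p}(q)\bigr)$ for $q\in\tildeF^u_f(z)$, which slides $q$ along its stable leaf back to the central unstable transversal. By construction $\pi^s$ is locally constant along stable leaves, hence trivially uniformly $C^r$ along the leaves of $\tildeF^s_f$; and along each unstable leaf it equals, up to the $C^r$ map $\tau^{-1}$, precisely the uniformly $C^r$ stable holonomy, hence uniformly $C^r$ along the leaves of $\tildeF^u_f$. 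Thus $\pi^s$ is uniformly $C^r$ along the two continuous transverse foliations $\tildeF^s_f$ and $\tildeF^u_f$, both of which have uniformly $C^r$ leaves.

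The main obstacle is exactly the regularity conclusion for $\pi^s$, and this is where $r_*$ enters. By Journ\'e's lemma, a function uniformly $C^r$ along two transverse continuous foliations with $C^r$ leaves is $C^r$ when $r\notin\NN$ (and $C^\infty$ when $r=+\infty$), whereas for integer $r$ the sharp output is only $C^{r-1+{\rm Lip}}$ — precisely the definition of $r_*$. Granting $\pi^s\in C^{r_*}$, I would finish by complementing it with any smooth submersion $\rho\colon U\to D^l$ that restricts to a diffeomorphism on each stable plaque; then $(\rho,\pi^s)$ is a $C^{r_*}$ map whose differential is an isomorphism by transversality, so by the inverse function theorem its local inverse $\phi=(\rho,\pi^s)^{-1}$ is a $C^{r_*}$ chart whose slices $\{\pi^s=\mathrm{const}\}$ are exactly the stable plaques. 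This gives $C^{r_*}$-smoothness of $\mathcal{F}^s_f$ and is consistent with part (2). The one point demanding care beyond bookkeeping is checking that the hypotheses of Journ\'e's lemma hold uniformly and that the integer-order loss genuinely cannot be improved, which is the content of the counterexamples in \cite[Section 6]{PSW}.
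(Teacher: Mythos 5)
Your proof is correct, and it reaches the stated conclusion by a route that is genuinely, if subtly, different from the paper's. Part (2) is exactly the local computation the paper dismisses as trivial. For part (1) both you and the paper rely on Journ\'e's lemma and account for the drop to $r_*$ at integer $r$ in the same way, but you apply the lemma to different objects: the paper's remark builds the chart directly as $\phi(a,b)={\rm Hol}^s_{f,\alpha(0),\alpha(a)}\big(\beta(b)\big)$ and invokes Journ\'e on $\phi$ itself, whereas you apply Journ\'e to the transverse submersion $\pi^s$ (projection along stable leaves onto a fixed unstable transversal) and then recover the chart by inverting $(\rho,\pi^s)$. Your decomposition buys something real: both directional regularities of $\pi^s$ are immediate, since $\pi^s$ is constant along stable leaves and coincides, up to the fixed $C^r$ parametrization $\tau^{-1}$, with the hypothesized uniformly $C^r$ stable holonomy along unstable leaves. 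In the paper's construction, by contrast, the restriction $a\mapsto\phi(a,b)$ to a horizontal slice is the \emph{unstable} holonomy applied to $\alpha$; the hypothesis only provides $C^r$-dependence on $z$ with derivatives continuous in $(x,y,z)$, not smooth dependence on the target leaf $y=\alpha(a)$, so the uniform $C^r$-smoothness of $\phi$ in the $a$-direction is a point the paper's sketch passes over and your version avoids entirely. The price is the extra inverse-function-theorem step in the class $C^{r_*}$ (harmless, since $(\rho,\pi^s)$ has invertible differential by transversality of $\ker D\pi^s=E^s_f$ with $\ker D\rho$), and the need to check that the level sets of $\pi^s$ are exactly the stable plaques, which follows from the local product structure. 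Both routes deliver the same $C^{r_*}$ conclusion, and your closing remark that the integer-order loss is sharp matches the paper's pointer to the counterexamples in \cite[Section 6]{PSW}.
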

   
   \begin{remark}
   Note that the second item of Lemma \ref{lem holonomy to foliation}  is trivial. And the first item  is just an application of Journ$\mathrm{\acute{e}}$'s  lemma \cite{regularity} which asserts that the regularity of a diffeomorphism can be obtained from the uniformly regularity of its restriction on two tranverse foliations with uniformly smooth leaves. Indeed, considering a point $x\in\RR^d$ and let $\alpha:D^l \to \tildeF^s_f(x)$ and $\beta:D^{d-l} \to \tildeF^u_f(x)$ be two $C^r$-parameterization such that $x= \big(\alpha(0),\beta(0) \big)$. Then \[\phi(a,b):={\rm Hol}^s_{f,\alpha(0),\alpha(a)}\big(\beta(b)\big)\] gives us a foliation chart whose derivatives along $D^l$ and $D^{d-l}$ are both $C^r$. Hence by Journ$\mathrm{\acute{e}}$'s  lemma, it is a $C^{r_*}$-foliation chart.
   \end{remark}

   On the other hand, the regularity of holonomy induced by $\tildeF^s_f$ can be given by the smoothness of the conjugacy $H$ restricted on the transversal direction.
   
   \begin{lemma}\label{lem conjugacy to holonomy}
   	Assume that the conjugacy $H:\RR^d\to \RR^d$ is uniformly $C^r$-smooth along the unstable foliation $\tildeF^u_f$, then the holonomy ${\rm Hol}^s_f$ is uniformly $C^r$-smooth.
   \end{lemma}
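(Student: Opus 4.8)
The plan is to exhibit the holonomy of $F$ as the $H$-conjugate of the holonomy of the linearization $A$, which is affine and hence $C^\infty$. Since $H$ is a homeomorphism carrying $\tildeF^s_f$ onto $\tildeF^s_A$ and $\tildeF^u_f$ onto $\tildeF^u_A$, for every $z\in\tildeF^u_f(x)$ we have $H\big(\tildeF^s_f(z)\cap\tildeF^u_f(y)\big)=\tildeF^s_A(H(z))\cap\tildeF^u_A(H(y))$, and reading off both sides gives the factorization
\begin{align*}
{\rm Hol}^s_{f,x,y}=\big(H|_{\tildeF^u_f(y)}\big)^{-1}\circ{\rm Hol}^s_{A,H(x),H(y)}\circ H|_{\tildeF^u_f(x)}.
\end{align*}
First I would record that the middle factor is uniformly $C^\infty$: because $A$ is linear, each leaf $\tildeF^u_A(\cdot)$ is an affine line and each $\tildeF^s_A(\cdot)$ an affine hyperplane, so ${\rm Hol}^s_{A,H(x),H(y)}$ is nothing but the restriction of the affine projection along $E^s_A$ onto an affine line, an affine map depending affinely on the base points.

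The two outer factors come from the hypothesis. By assumption $H|_{\tildeF^u_f(x)}$ is $C^r$ with leafwise derivatives up to order $r$ varying continuously in $(x,z)$; the real content is that the inverse $\big(H|_{\tildeF^u_f(y)}\big)^{-1}$ is again $C^r$ with the same uniformity. As the unstable leaves are one-dimensional, the inverse function theorem reduces this to the claim that the unstable Jacobian $u:=|D^uH|\ge0$ is continuous and uniformly bounded away from $0$. Granting this, the chain rule shows ${\rm Hol}^s_{f,x,y}$ is $C^r$, and since all three factors have derivatives depending continuously on $(x,y,z)$ and the lower bound on $u$ is uniform, the holonomy ${\rm Hol}^s_f$ is uniformly $C^r$.

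The nonvanishing of $u$ is the step I expect to be the main obstacle. Differentiating the conjugacy relation $H\circ F=A\circ H$ along $\tildeF^u_f$ produces the cocycle identity
\begin{align*}
u\big(F(x)\big)\,|D^uF(x)|=\mu\,u(x),\qquad u(x)=\mu^{-n}\,|D^uF^n(x)|\,u\big(F^n(x)\big)\quad(n\in\ZZ),
\end{align*}
where $\mu=e^{\lambda^u_A}>1$ is the unstable multiplier of $A$. The function $u$ descends to a continuous nonnegative function on the compact torus, and $u\not\equiv0$ since $H$ restricts to a homeomorphism on every leaf; the cocycle then makes $\{u=0\}$ closed and $F$-invariant, and transitivity of $F$ forces this set to have empty interior. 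To upgrade dense positivity to a uniform positive lower bound I would pass to the additive form
\begin{align*}
\log|D^uF(x)|-\log\mu=\log u(x)-\log u\big(F(x)\big),
\end{align*}
and invoke Liv\v{s}ic's theorem: a continuous solution $\log u$ exists exactly when the periodic obstruction $\sum_{k=0}^{N-1}\big(\log|D^uF(F^k(p))|-\log\mu\big)$ vanishes for every periodic point $p$ of period $N$, i.e.\ when $\lambda^u_f(p)\equiv\lambda^u_A$. This matching of periodic data is precisely what is available in our setting, and it yields a continuous, hence bounded, $\log u$; consequently $u$ is bounded away from $0$ and $\infty$, the inverse function theorem applies uniformly, and the factorization above delivers the uniform $C^r$-smoothness of ${\rm Hol}^s_f$.
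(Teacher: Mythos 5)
Your factorization ${\rm Hol}^s_{f,x,y}=\big(H|_{\tildeF^u_f(y)}\big)^{-1}\circ{\rm Hol}^s_{A,H(x),H(y)}\circ H|_{\tildeF^u_f(x)}$, together with the observation that the middle factor is affine (indeed a translation along $\tildeF^u_A$), is exactly the paper's proof; up to that point you and the paper coincide. The divergence is that the paper treats the regularity of the outer inverse factor as immediate, while you correctly flag that inverting $H|_{\tildeF^u_f(y)}$ requires the leafwise derivative $u=|D^uH|$ to be bounded away from zero, and you devote most of your argument to that point.

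Unfortunately, the way you close that point is circular. Your Liv\v{s}ic step needs the periodic obstruction to vanish, i.e.\ $\lambda^u_f(p)\equiv\lambda^u_A$ for all periodic $p$, and you assert this is ``precisely what is available in our setting.'' It is not a hypothesis of the lemma: in the paper the equality of periodic unstable exponents is a \emph{conclusion} of Theorems 1.2 and 1.7, derived only after the smoothness of $H$ along unstable leaves (and hence this very lemma) has been established. From the cocycle $u(F(x))|D^uF(x)|=\mu\,u(x)$ alone one only gets, at a periodic point $p$ of period $N$, that $u(p)\big(|D^uF^N(p)|-\mu^N\big)=0$, so the periodic data match \emph{where $u$ does not vanish} --- which is the alternative you are trying to exclude. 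The topological argument you give ($\{u=0\}$ closed, invariant, of empty interior) likewise does not yield a uniform lower bound, as you acknowledge. The honest resolution is upstream of this lemma: the hypothesis ``$H$ is uniformly $C^r$ along $\tildeF^u_f$'' is supplied in the paper by Lemma \ref{lem AC to Cr} (de la Llave's bootstrap), whose proof produces $H|_{\tildeF^u_f}$ as a uniformly $C^r$ \emph{diffeomorphism} onto its image --- both $H$ and $H^{-1}$ are absolutely continuous along the respective unstable foliations, so $u$ is automatically bounded away from $0$ and $\infty$. If you read the hypothesis in that (intended) sense, your first two paragraphs already constitute a complete proof and the Liv\v{s}ic detour is unnecessary; if you insist on the weaker reading, the nonvanishing of $u$ cannot be extracted by the argument you propose.
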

  \begin{proof}
	For given $x,y\in\RR^d$ and $z\in \tildeF_f^u(x)$, the holonomy map satisfies
	\[ {\rm Hol}^s_{f,x,y}(z)= H^{-1}\circ {\rm Hol}^s_{A,H(x),H(y)}\circ H(z), \]
	since $H$ preserves the foliations. Note that the holonomies ${\rm Hol}^s_A$ induced by $\tildeF^s_A$ are actually translations. Therefore the holonomies ${\rm Hol}^s_{f}$ has the same regularity as $H|_{\tildeF^u}$.
  \end{proof}
  
  Combining Lemma \ref{lem holonomy to foliation} and Lemma \ref{lem conjugacy to holonomy}, we can get Theorem \ref{thm2} and Theorem \ref{thm1} by proving that $H$ is $C^r$-smooth along the unstable leaves.  Precisely, we will  prove the following property.
  
  \begin{proposition}\label{prop H smooth}
  		Let  $f:\TT^d\to\TT^d$be a $C^r$-smooth Anosov diffeomorphism with the $(d-1)$-dimension $C^k$-smooth $(k<r)$ stable foliation $\mathcal{F}_f^s$. If one of the followings holds,
  		\begin{enumerate}
  			\item $r> 2$  and  $k>2$; 
  			\item $d=2, r\geq 2$ and $k=1+{\rm AC}$.
  		\end{enumerate}
  		Then the conjugacy $H$ between lifts $F$ and $A$ is uniformly $C^r$-smooth along each unstable leaf. 
  \end{proposition}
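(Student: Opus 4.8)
The plan is to reduce the transverse regularity of $H$ to a linearization problem for a circle diffeomorphism and then to bootstrap along the expanding direction. Since the stable foliation $\mathcal{F}^s_f$ is codimension one and, being topologically conjugate via $h$ to the minimal linear foliation $\mathcal{F}^s_A$, is itself minimal, I would first fix a $C^1$ closed transversal $\gamma\subset\TT^d$ (available because a codimension-one Anosov stable foliation is at least $C^1$) and form the first-return holonomy map $g:\gamma\to\gamma$ of $\mathcal{F}^s_f$. By Lemma \ref{lem holonomy to foliation}(2) this $g$ is a $C^k$ circle diffeomorphism, and by the computation underlying Lemma \ref{lem conjugacy to holonomy} (the linear holonomies ${\rm Hol}^s_A$ being translations) the restriction $H\mid_\gamma$ conjugates $g$ to the first-return map of $\mathcal{F}^s_A$, which is a rigid rotation $R_\rho$. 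Thus $g$ is topologically conjugate to $R_\rho$ and has rotation number $\rho$.

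Next I would verify that $\rho$ is Diophantine. The number $\rho$ is the translation number obtained by projecting the lattice $\ZZ^d$ onto the one-dimensional unstable eigenspace $E^u_A$ of the integer matrix $A$, so its value is an algebraic number determined by the (quadratic, when $d=2$) irrational eigendata of $A$; algebraic irrationals are Diophantine, and when $d=2$ the eigen-slope is a quadratic irrational, hence of bounded (constant) type. With the arithmetic of $\rho$ in hand I would invoke the KAM/Herman--Yoccoz statement of Theorem \ref{thm5}: in case (1) a $C^{2+\e}$ circle diffeomorphism with Diophantine rotation number, and in case (2) a $C^{1+{\rm AC}}$ circle diffeomorphism on $\TT^2$ with bounded-type rotation number, is $C^1$-linearizable. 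Since a topological conjugacy to an irrational rotation is unique up to a rotation, $H\mid_\gamma$ agrees up to a rotation with the smooth linearizing conjugacy and is therefore $C^1$ (with absolutely continuous/H\"older derivative). Transporting this across leaves by the $C^k$ ($k\ge1$) stable holonomy, I conclude that $H$ is $C^1$ along the one-dimensional unstable leaves.

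Finally I would upgrade this a priori $C^1$ transverse regularity to full $C^r$ using the expanding unstable dynamics. Restricting the relation $H\circ F=A\circ H$ to the $F$-invariant unstable leaf through a fixed point, where $A$ acts as multiplication by the unstable eigenvalue $\lambda$, and differentiating along the leaf gives the Liv\v{s}ic cohomological equation $\log H'\circ F-\log H'=\log\lambda-\log|D^uF|$. The continuous (indeed $C^1$) solution $\log H'$ furnished by the previous step forces the periodic obstructions to vanish, that is $\lambda^u_f(p)\equiv\lambda^u_A$ for every periodic point $p$; conversely, with these obstructions vanishing and a $C^{r}$ right-hand side, the de la Llave--Marco--Mori\'on bootstrapping of this equation along the expanding direction promotes $\log H'$ to $C^{r-1}$, so $H$ is $C^r$ along the unstable leaf. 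Uniform hyperbolicity and compactness of $\TT^d$ then yield the bound uniformly over all unstable leaves, which is the assertion of the Proposition.

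The main obstacle I anticipate is the regularity accounting in the middle step: extracting a genuine $C^1$ (or $C^{1+}$) transverse conjugacy from the borderline hypotheses $C^{2+\e}$ and $C^{1+{\rm AC}}$ is exactly where the precise form of Theorem \ref{thm5}, and the distinction between merely Diophantine (general $d$) and bounded-type ($d=2$) rotation numbers, becomes indispensable; one must check that this a priori regularity is precisely what is needed to seed the Liv\v{s}ic bootstrap. A secondary technical point is the clean construction of the transverse circle and the verification that its rotation number is Diophantine when $d>2$, where $E^u_A$ is one-dimensional but $E^s_A$ has higher dimension.
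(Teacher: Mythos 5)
Your overall architecture is the paper's: turn the codimension-one stable foliation into a circle diffeomorphism acting on a transversal, observe that the conjugacy $H$ transports it to a rigid rotation whose angle is an algebraic irrational (quadratic when $d=2$) determined by $A$, apply Theorem \ref{thm5}, and then bootstrap the resulting low regularity of $H$ along unstable leaves to $C^r$ via the periodic data / de la Llave mechanism. Two points in your write-up, however, do not go through as stated.

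First, and most seriously, in case (2) you claim that Theorem \ref{thm5} makes the circle map ``$C^1$-linearizable'' and you then seed the Liv\v{s}ic bootstrap with ``the continuous (indeed $C^1$) solution $\log H'$.'' But part (2) of Theorem \ref{thm5} (the K.O.\ condition for a $C^{1+{\rm AC}}$ diffeomorphism with quadratic rotation number) yields only an \emph{absolutely continuous} conjugacy, not a $C^1$ one; $H'$ then exists merely almost everywhere and $\log H'$ need not be continuous, so the periodic obstructions cannot be read off by evaluating a continuous coboundary at periodic orbits. The paper closes exactly this gap by routing through Lemma \ref{lem AC to Cr}: absolute continuity of both $H$ and $H^{-1}$ along the one-dimensional unstable foliations already forces $\lambda^u_f(p)\equiv\lambda^u_A$ and hence $C^r$ regularity, with no intermediate $C^1$ step. (Your case (1) is fine, since there Theorem \ref{thm5} does give $C^{1+\e-\delta}$.) You should also verify the auxiliary hypothesis $T''/T'\in L^p$ demanded by the K.O.\ condition, which the paper checks explicitly. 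Second, your construction of the circle map via ``the first-return holonomy map of a closed transversal'' is not yet a construction: for $d>2$ the holonomy of a minimal codimension-one foliation on a closed transversal is a priori a pseudogroup generated by $d$ commuting partial maps, not a single globally defined circle diffeomorphism, and an unstable leaf itself is dense rather than closed. The paper instead works on the universal cover, uses the $\ZZ^d$-action to define commuting maps $T^n_f$ on one unstable leaf $\tildeF^u_f(0)$, and quotients that leaf by $T^{e_d}_f$ via an explicitly built chart $h_f$ to obtain the circle; some such argument is needed to produce one honest $C^k$ circle diffeomorphism whose rotation number is the algebraic number you want.
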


  We will prove this proposition in Section 3. Before that, we note that to get $C^r$-regularity of $H|_{\tildeF^u_f}$, we can just prove a lower one. Indeed, by an enlightening work of de la Llave\cite{dela}, one can get the $C^r$-smoothness from the absolute continuity, see  \cite[Lemma4.1, Lemma 4.5 and Lemma 4.6]{dela}. Here we state it for convenience.
    
   \begin{lemma}[\cite{dela}]\label{lem AC to Cr}
   	Let $f:\TT^d\to\TT^d$be a $C^r$-smooth Anosov diffeomorphism with one-dimensional unstable foliation $\mathcal{F}^u_f$. If one of the followings holds,
   	\begin{enumerate}
   		\item  The conjugaies $H$ and $H^{-1}$ are absolutely continuous restricted on the unstable foliation $\tildeF^u_f$ and $\tildeF^u_A$, respectively.
   		\item For all periodic point $p$ of $f$, $\lambda^u_f(p)\equiv \lambda^u_A$;
   	\end{enumerate}
   	  Then the restriction $H|_{\tildeF^u_f}$ is uniformly $C^r$-smooth.
   \end{lemma}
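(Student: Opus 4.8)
The plan is to reduce the statement to a scalar Liv\v{s}ic (cohomological) equation for the logarithmic leafwise derivative of $H$, and then to upgrade its solution from measurable/H\"older to $C^{r-1}$ \emph{along} the one-dimensional unstable leaves, which is exactly what yields $H|_{\tildeF^u_f}\in C^r$. I parametrise the unstable leaves of $F$ by arc length; since the leaves are $C^r$ immersed curves and $F$ is $C^r$, the restriction of $F$ to an unstable leaf is a $C^r$ expanding map and the leafwise unstable Jacobian $J(x):=\big\|DF\arrowvert_{E^u_f(x)}\big\|$ is $C^{r-1}$ along each leaf, varying continuously with the leaf. On the $A$-side each unstable leaf is a straight line which $A$ expands by the factor $\mu:=e^{\lambda^u_A}$. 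Differentiating $H\circ F=A\circ H$ in the unstable direction gives, wherever the leafwise derivative $DH^u$ exists and is positive,
$$\log DH^u(F(x))-\log DH^u(x)=\log\mu-\log J(x). \qquad (\star)$$

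First I would treat item (2). By the Liv\v{s}ic theorem, the hypothesis $\lambda^u_f(p)\equiv\lambda^u_A=\log\mu$ at every periodic $p$ says precisely that the H\"older function $\log J-\log\mu$ has vanishing periodic sums, hence is a coboundary: there is a H\"older $P$ with $\log J-\log\mu=P-P\circ f$. The crucial point is that $P$ is in fact $C^{r-1}$ along the unstable leaves, uniformly. I would get this from the backward telescoping formula, valid for $x,y$ on a common unstable leaf,
$$P(x)-P(y)=\sum_{k\ge 1}\big(\log J(f^{-k}y)-\log J(f^{-k}x)\big),$$
which is legitimate because $f^{-1}$ contracts that leaf exponentially while $P$ is continuous. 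Since $\log J$ is $C^{r-1}$ along leaves, this series, together with all its leafwise derivatives up to order $r-1$ (computed by the chain rule / Fa\`a di Bruno), converges geometrically and uniformly in the leaf, so $P$ is leafwise $C^{r-1}$. Setting $DH^u:=C_0e^{P}$ and integrating along each leaf produces a leafwise $C^r$ map which, by $(\star)$, intertwines $F$ with $A$ along unstable leaves; by expansiveness the topological conjugacy is unique in its homotopy class, so this map coincides with $H$, giving $H|_{\tildeF^u_f}\in C^r$ uniformly.

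For item (1) I would run the same scheme from the almost-everywhere version of $(\star)$. Absolute continuity of $H$ and $H^{-1}$ along the unstable foliations guarantees that $DH^u$ exists almost everywhere on each leaf, is positive, and is locally $L^1$; thus $(\star)$ holds a.e., exhibiting $\log DH^u$ as an $L^1$ solution of the cohomological equation with H\"older data $\log\mu-\log J$. A measurable Liv\v{s}ic rigidity argument — the one-dimensionality of $E^u_f$ makes this a genuinely scalar equation, and the exponential contraction of $f^{-1}$ along leaves supplies the convergence — then forces $\log DH^u$ to agree a.e. with the continuous, leafwise $C^{r-1}$ function $c+P$ built above (equivalently, absolute continuity already forces the periodic Lyapunov exponents to match, reducing item (1) to item (2)). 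Since $H$ is the absolutely continuous integral of its leafwise derivative, $H|_{\tildeF^u_f}$ is $C^r$.

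The main obstacle is this regularity upgrade of the transfer function $P$, from the H\"older output of Liv\v{s}ic's theorem to genuine $C^{r-1}$ smoothness \emph{along} the leaves, uniformly in the transverse parameter. Controlling the higher leafwise derivatives of the summands $\log J\circ f^{-k}$ requires that their chain-rule expansions decay geometrically — which is where the one-dimensional unstable bundle, the $C^{r-1}$ leafwise regularity of $J$, and the precise contraction rates of $f^{-1}$ all enter — and, at top order, tracking the H\"older seminorm (respectively the Lipschitz endpoint when $r\in\NN$) so that the series converges in the full $C^{r-1}$ topology and the derivatives glue continuously across leaves to give the claimed uniformity.
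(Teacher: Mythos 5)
The paper does not prove this lemma at all: it is quoted from de la Llave \cite{dela} (his Lemmas 4.1, 4.5 and 4.6), so your proposal must be measured against that cited argument. Your overall circle of ideas is the right one — the leafwise cohomological equation $(\star)$, Livšic's theorem producing a Hölder transfer function $P$, the backward telescoping series upgrading $P$ to uniformly leafwise $C^{r-1}$ (this bootstrap is indeed de la Llave's), and, for item (1), the a.e.\ version of $(\star)$ plus measurable Livšic rigidity (taken with respect to the SRB measure, whose conditionals are equivalent to leaf Lebesgue) followed by integrating the a.e.\ derivative, which is legitimate precisely because absolute continuity of $H$ along leaves is \emph{assumed} there. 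That part of your sketch matches the cited proof.

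The genuine gap is in your item (2), at the sentence ``by expansiveness the topological conjugacy is unique in its homotopy class, so this map coincides with $H$.'' The map you build by integrating $e^{P}$ is defined leaf by leaf, with a free translation (normalization) parameter on each leaf; it does not assemble into a $\ZZ^d$-equivariant conjugacy on $\TT^d$ (base points on distinct leaves cannot be chosen compatibly with the deck transformations, since $\tildeF^s_f(0)+m\neq\tildeF^s_f(0)$), so global uniqueness of the conjugacy is not applicable. And at the level of a single leaf, uniqueness fails badly: a homeomorphism of $\RR$ commuting with $t\mapsto\mu t$ is determined by an \emph{arbitrary} homeomorphism of a fundamental domain $[t_0,\mu t_0]$ extended equivariantly, so $H$ restricted to a leaf could a priori differ from your smooth map by a wildly nonsmooth equivariant factor; knowing both conjugate $F|_{\tildeF^u_f}$ to the same affine expansion identifies nothing. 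Bridging this is exactly the content of the cited Lemma 4.1 of \cite{dela}: since matching periodic data make $\log J-\log\mu$ a coboundary, the SRB measure of $f$ coincides with its measure of maximal entropy, so $h$ carries it to the MME of $A$, namely Lebesgue; the SRB conditionals along unstable leaves have density proportional to $e^{P}$, whence $H(y)-H(x)=c\int_x^y e^{P}$ along each leaf, i.e.\ item (2) \emph{implies} the absolute-continuity hypothesis of item (1) — the reverse of your reduction — and only then does your $C^{r-1}$ control of $P$ deliver $H|_{\tildeF^u_f}\in C^r$. Without this measure-theoretic step (or some substitute forcing $DH^u=ce^{P}$ a.e.), your argument for item (2), and hence also your (1)$\Rightarrow$(2) reduction, does not close.
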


	Now we can finish the proof of our main theorems.
	
	\begin{proof}[Proof of Theorem \ref{thm2}, Corollary \ref{cor1} and Theorem \ref{thm1}]
	Let $f$ satisfy the condition of Theorem  \ref{thm2} or Theorem \ref{thm1}.	 By Proposition \ref{prop H smooth}, $H|_{\tildeF^u_f}$ is smooth, so is $h|_{\mathcal{F}^u_f}$. 
	 Hence  $\lambda^u_f(p)\equiv \lambda^u_A$, for all periodic point $p$ of $f$. Combining with Lemma \ref{lem holonomy to foliation} and Lemma \ref{lem conjugacy to holonomy}, we can get these two theorems immediately. 
	 
	 Note that  ``$(1)\implies (2)$"  of Corollary \ref{cor1}  is given by Theorem \ref{thm1},  ``$(2)\implies (3)$"  is guaranteed by the case 2 of Lemma \ref{lem AC to Cr} and  ``$(3)\implies (1)$"  is trivial.
	\end{proof}

	We will get the absolute continuity of $H$ restricted on unstable leaves by applying following KAM theory. 	Let $R_{\alpha}:\mathbb{R}\to\RR$ be the translation on $\mathbb{R}$ such that $R_{\alpha}(x)=x+\alpha,x\in \mathbb{R}$.  Denote the induced  rigid rotation on $S_1$ by $\overline{R_{\alpha}}$. 

	\begin{theorem}[\cite{KO,KT}] \label{thm5}
		Let $T$ be an  orientation-preserving circle diffeomorphism with  irrational  rotation number $\alpha$ which is algebraic. Then one has the following two properties
		\begin{enumerate}
			\item If  $T$ is $C^{2+\epsilon}$-smooth. Then $T$ is $C^{1+\epsilon-\delta}$-smoothly conjugate to $\overline{R_{\alpha}}$ for any $\delta>0$. 
			\item If  the pair $(T,\alpha)$ satisfies the K.O. condition \cite{KO}, in particular, the conditions: $T$ is $C^{1+{\rm AC}}$, $T''/T'\in L^p$ for some $p>1$  and  deg$(\alpha)=2$  satisfy the K.O. condition.
			Then $T$ is absolutely continuously conjugate to $\overline{R_{\alpha}}$.    
		\end{enumerate}
	\end{theorem}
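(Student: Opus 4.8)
The plan is to reduce both assertions to the sharp circle-diffeomorphism linearization theory of Katznelson--Ornstein \cite{KO,KT}, the only additional ingredient being an arithmetic observation about algebraic rotation numbers. The first step would be purely number-theoretic: by Roth's theorem, an algebraic irrational $\alpha$ of degree $\geq 2$ has irrationality measure exactly $2$, so for every $\tau>0$ there is $C=C(\alpha,\tau)>0$ with $\|q\alpha\|_{\RR/\ZZ}\geq C\,q^{-(1+\tau)}$ for all $q\geq 1$; thus $\alpha$ is Diophantine of exponent $\tau$ for \emph{every} $\tau>0$. If moreover $\deg(\alpha)=2$, then $\alpha$ is a quadratic irrational, its continued-fraction expansion is eventually periodic, so its partial quotients are bounded; hence $\alpha$ is of constant type, $\|q\alpha\|\geq C/q$ for all $q\geq1$. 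I would record these as the small-divisor hypotheses feeding the two statements.

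For item $(1)$ I would linearize the conjugacy equation. Writing the conjugacy as $\phi$ with $\phi\circ T=\overline{R_\alpha}\circ\phi$ and differentiating gives $\phi'(x)=\phi'(T(x))\,T'(x)$, so $v:=\log\phi'$ solves the cohomological equation $v-v\circ T=\log T'$ over the dynamics of $T$. Transporting to the rotation coordinate via $\phi$ turns this into $\tilde v-\tilde v\circ\overline{R_\alpha}=g$ with $g$ built from $\log T'$, whose Fourier solution carries the multipliers $(e^{2\pi i n\alpha}-1)^{-1}$ of size $\asymp\|n\alpha\|^{-1}$. The $C^{2+\epsilon}$ hypothesis makes $\log T'$ of class $C^{1+\epsilon}$ and the nonlinearity $T''/T'$ H\"older, while the Denjoy--Koksma inequality supplies the uniform a priori distortion bounds on $(T^n)'$ needed to control $g$. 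Since $\alpha$ is Diophantine of arbitrarily small exponent $\tau$, inverting the small divisors loses only $\tau$ derivatives; balancing $\epsilon$ against $\tau$ should give $v\in C^{\epsilon-\delta}$, i.e. $\phi\in C^{1+\epsilon-\delta}$ for every $\delta>0$, which is the Katznelson--Ornstein estimate specialized to (near) constant-type numbers.

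For item $(2)$ the hypotheses are precisely those isolated in \cite{KO}: $C^{1+{\rm AC}}$ makes the nonlinearity $T''/T'=(\log T')'$ exist a.e. and lie in $L^p$ for some $p>1$, while $\deg(\alpha)=2$ forces the constant-type bound $\|q\alpha\|^{-1}\lesssim q$ from the first step. I would then run the sharp $L^p$ version of the Katznelson--Ornstein scheme: the borderline growth $\|q\alpha\|^{-1}\lesssim q$ is exactly the rate at which $L^p$-control of $T''/T'$ keeps the Radon--Nikodym derivative $\phi'$, equivalently the density of the unique $T$-invariant measure, in $L^1$ and bounded away from $0$ and $\infty$ along the Denjoy--Koksma sums, thereby verifying the K.O. condition and forcing $\phi$ and $\phi^{-1}$ to be absolutely continuous.

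The hard part in both items will be the small-divisor problem inherent in inverting $v\mapsto v-v\circ T$, whose symbol vanishes like $\|n\alpha\|$; controlling its inverse is possible only because Roth's theorem (item $(1)$) and the bounded partial quotients of a quadratic irrational (item $(2)$) provide the required lower bounds on $\|q\alpha\|$. The additional delicacy in item $(2)$ is that one works at the very edge of the regularity scale, where only $L^p$ rather than uniform control of $T''/T'$ is available, so the estimates must be carried out in the $L^p$ framework of \cite{KO} rather than in H\"older spaces; this borderline analysis is where I expect the real work to lie.
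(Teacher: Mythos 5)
This theorem is not proved in the paper at all: it is imported as a black box from \cite{KO} and \cite{KT}, and the only internal content surrounding it is the arithmetic remark that $\deg(\alpha)=2$ is equivalent to a periodic continued fraction expansion (hence bounded partial quotients). Your number-theoretic step is correct and matches that remark exactly: Roth's theorem gives irrationality measure $2$ for every algebraic irrational, so $\alpha$ is Diophantine of every exponent $\tau>0$, and Lagrange's theorem gives constant type when $\deg(\alpha)=2$. So the part of your proposal that the paper actually needs is fine.

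As a proof of the theorem itself, however, there is a genuine gap at the decisive step of item (1). The transported cohomological equation $\tilde v-\tilde v\circ\overline{R_\alpha}=g$ has $g=\log T'\circ\phi^{-1}$, which depends on the unknown conjugacy $\phi$; Fourier inversion of the small divisors therefore cannot be applied directly, and ``balancing $\epsilon$ against $\tau$'' is the \emph{conclusion} of a convergent nonlinear scheme, not a step you have supplied. Moreover, at regularity $C^{2+\epsilon}$ the classical Fourier--KAM iteration does not close: each step loses more derivatives than quadratic convergence can recover at this low smoothness, which is precisely why \cite{KT} proves the statement by renormalization (exponential convergence of renormalizations of $C^{2+\alpha}$ circle maps together with control of $q_{n+1}$ in terms of $q_n$), and why the absolute-continuity result of \cite{KO} rests on Denjoy--Koksma-type distribution estimates for $(T^{q_n})'$ rather than on small divisors. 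Similarly, in item (2) you assert without computation that constant type plus $T''/T'\in L^p$ ``is exactly the rate'' verifying the K.O.\ condition; that condition is a specific summability requirement along the denominators $q_n$, and checking it is a genuine (if short) calculation. In sum, your roadmap delegates its hard steps to exactly the content of the cited papers; within this paper the intended ``proof'' is the citation itself, supplemented by the arithmetic observation you did get right.
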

	  In the case 2, the fact \cite{diophan} that deg$(\alpha)=2$ if and only if $\alpha$ has a periodic simple continued fraction expansion is helpful to check the K.O. condition.

	\section{Absolutely continuous rotation induced by smooth foliation}             
	
	In this section, we will obtain our main result Proposition \ref{prop H smooth}.  To prove it, as mentioned before, we can consider the circle  diffeomorphism induced by the codimenison-one foliation $\tildeF^s_f$ and show it is smooth conjugate to a rigid rotation given by $\tildeF^s_A$. We use the same notations as Section 2. 
	
	For reducing the action of $\tildeF^s_f$ on $\tildeF^u_f(0)$ to action on $S^1$, one can apply the $\ZZ^d$-actions. By the Global Product Structure,  the following map $T_n^i$ is well defined. 
	For $n\in \mathbb{Z}^d$ and $i\in\{f,A\}$,
	\begin{align*}
		T_i^n:\tildeF_i^u(0) \to \tildeF_i^u(0),\qquad \qquad \quad  \\
		T_i^n(x)=\tildeF_i^u(0)\cap \tildeF_i^s(x+n),\quad \forall x\in\tildeF_i^u(0). 
	\end{align*}
	Note that  for each $n\in\mathbb{Z}^d$, $T_i^n(x)={\rm Hol}^s_{i,n,0}\circ R_n(x)$ where $R_n(x)=x+n$.

	\begin{proposition}\label{prop}
		Assume that $\mathcal{F}^s_f$ is a $C^k$-smooth foliation. Then  for each $n\in\mathbb{Z}^d$, $T_i^n$ is $C^k$-smooth, $i\in\{f,A\}$. Moreover, one has:
		\begin{enumerate}
			\item $T_i^n\circ T_i^m=T_i^{n+m}$ for all $n,m\in \mathbb{Z}^d$;
			\item $H\circ T_f^n=T_A^n\circ H$ for all $n\in \mathbb{Z}^d$.
		\end{enumerate}
	\end{proposition}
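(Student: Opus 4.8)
The plan is to handle the claimed smoothness and the two algebraic identities separately, deducing the former from the holonomy regularity already recorded and the latter from the $\ZZ^d$-equivariance of the lifted objects together with the Global Product Structure. For the smoothness I would start from the factorization $T_i^n={\rm Hol}^s_{i,n,0}\circ R_n$ noted just above, where $R_n(x)=x+n$ is the affine, hence $C^\infty$, translation carrying $\tildeF^u_i(0)$ onto $\tildeF^u_i(n)$ and ${\rm Hol}^s_{i,n,0}:\tildeF^u_i(n)\to\tildeF^u_i(0)$ is the stable holonomy. For $i=f$ the hypothesis that $\mathcal{F}^s_f$ is $C^k$ lets me invoke Lemma \ref{lem holonomy to foliation}(2), which gives that ${\rm Hol}^s_f$ is uniformly $C^k$; composing the $C^k$ holonomy with the smooth translation yields $T_f^n\in C^k$. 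For $i=A$ the stable foliation $\tildeF^s_A$ is linear, so its holonomies are translations (as already used in Lemma \ref{lem conjugacy to holonomy}) and $T_A^n$ is in fact affine, hence $C^\infty$.

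For item (1) I would verify the cocycle identity pointwise. Fix $x\in\tildeF^u_i(0)$ and put $y:=T_i^m(x)=\tildeF^u_i(0)\cap\tildeF^s_i(x+m)$, so that $y$ and $x+m$ lie on a common stable leaf, $\tildeF^s_i(y)=\tildeF^s_i(x+m)$. Since $\tildeF^s_i$ is the lift of a foliation on $\TT^d$, it is invariant under integer translations, $\tildeF^s_i(z+n)=\tildeF^s_i(z)+n$ for $n\in\ZZ^d$; translating the previous leaf identity by $n$ gives $\tildeF^s_i(y+n)=\tildeF^s_i(x+m+n)$. Hence
\[ T_i^n(y)=\tildeF^u_i(0)\cap\tildeF^s_i(y+n)=\tildeF^u_i(0)\cap\tildeF^s_i(x+m+n)=T_i^{n+m}(x), \]
the uniqueness of each intersection point being guaranteed by the Global Product Structure; this is exactly $T_i^n\circ T_i^m=T_i^{n+m}$.

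For item (2) I would use that $H$ preserves both foliations and, being the lift of a conjugacy homotopic to the identity, commutes with integer translations, $H(x+n)=H(x)+n$. Given $x\in\tildeF^u_f(0)$, the point $T_f^n(x)$ lies in $\tildeF^u_f(0)$ and in $\tildeF^s_f(x+n)$; applying $H$ and using $H(0)=0$ sends it into $\tildeF^u_A(0)$ and into $\tildeF^s_A\big(H(x+n)\big)=\tildeF^s_A\big(H(x)+n\big)$. By the Global Product Structure this common point is unique and equals $T_A^n\big(H(x)\big)$, which gives $H\circ T_f^n=T_A^n\circ H$.

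The only step carrying genuine analytic content is the smoothness of $T_f^n$, which rests entirely on Lemma \ref{lem holonomy to foliation}(2); the two identities are formal consequences of $\ZZ^d$-invariance, the equivariance $H(x+n)=H(x)+n$, and the uniqueness of transverse intersections. The point requiring the most care is therefore the bookkeeping of leaves and domains, namely checking that each intersection written above is a single point via the Global Product Structure and that the maps in question indeed carry $\tildeF^u_f(0)$ into $\tildeF^u_A(0)$.
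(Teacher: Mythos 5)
Your proposal is correct and follows essentially the same route as the paper: the smoothness comes from the factorization $T_i^n={\rm Hol}^s_{i,n,0}\circ R_n$ together with Lemma \ref{lem holonomy to foliation}(2), and the two identities come from the integer-translation invariance of the lifted foliations and the facts that $H$ preserves the foliations, commutes with integer translations, and fixes $0$. The only difference is presentational — you verify the identities pointwise via the Global Product Structure, whereas the paper writes the same facts as algebraic identities among holonomy maps — and this changes nothing of substance.
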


	\begin{proof}
		The regularity of $T_i^n={\rm Hol}^s_{i,n,0}\circ R_n(x)$ is directly from the $C^k$-holonomy,  since the holonomy is smoother than the foliation (see Lemma \ref{lem holonomy to foliation}). And $\{T_i^n\}_{n\in\mathbb{Z}^d}$ is commutative by the fact that the holonomy maps  are commutative with the $\mathbb{Z}^d$-actions on $\mathbb{R}^d$, i.e., \[R_m\circ {\rm Hol}^s_{i,x,y}(z)={\rm Hol}^s_{i,R_m(x),R_m(y)}\circ R_m, \quad  \forall m\in\mathbb{Z}^d, \forall x,y\in\RR^d \ {\rm and}\  \forall z\in \tildeF^u_i(x),\]
		since $\tildeF^{s/u}_i(x+m)=\tildeF^{s/u}_i(x)+m$ for all $x\in\RR^d$ and $m\in\ZZ^d$.  Hence 
		\begin{align*}
			T_i^n\circ T^i_m &= {\rm Hol}^s_{i,n,0}\circ R_n\circ {\rm Hol}^s_{i,m,0}\circ R_m \\ &= {\rm Hol}^s_{i,n,0}\circ  {\rm Hol}^s_{i,n+m,n}\circ R_{n+m}
			={\rm Hol}^s_{i,n+m,0}\circ R_{n+m}=T^{n+m}_i.
		\end{align*}
	  Recall that we can assume  $H(0)=0$. 	Note that $H$ preserves the foliation and satisfies $H\circ R_m=R_m\circ H$,  for all $m\in\mathbb{Z}^d$.  Hence
		\[	H\circ T^n_f=H\circ  {\rm Hol}^s_{f,n,0}\circ R_n=  {\rm Hol}^s_{A,n,0}\circ H\circ  R_n=  {\rm Hol}^s_{A,n,0}\circ R_n\circ  H= T^n_A\circ H.\]
      This  completes the proof of proposition.
	\end{proof}

	Now we are going to prove Proposition \ref{prop H smooth}. 	Let $\{ e_i\}_{i=1}^d$ be an orthonormal basis of $\RR^d$. We will reduce a pair of conjugate $\mathbb{Z}^d$-actions, for instance $(T^{e_1}_f, T^{e_1}_A)$, to be a pair of conjugate circle diffeomorphisms and show the conjugacy is absolutely continuous by applying the KAM theory (Theorem \ref{thm5}). 
	This method has a similar spirit with one used by Rodriguez Hertz, F. in [\citen{FRH}].

	\begin{proof}  [Proof of Proposition \ref{prop H smooth}]
		
		We pick two unit vector of the normal orthogonal basis, for example $e_1, e_d$. Assume that $\mathcal{F}^s_f$ is a $C^k$-smooth codimension-one foliation, where $k$ satisfies the condition of proposition. Firstly, we use $T^{e_1}_f$ to construct a $C^k$ circle diffeomorphism.
		We still denote the translation on $\RR$ by $R_\alpha(x)=x+\alpha$ and the natural projection by $\pi:\RR\to S^1$ for short. 
		
		\begin{claim}\label{claim f circle map}
			There exists a $C^k$ diffeomorphism $h_f:\mathbb{R} \to \tildeF^u_f(0)$ satisfying the followings
			\begin{enumerate}
				\item $h_f \circ R_1=T_f^{e_d} \circ h_f$;
				\item $T_f\circ R_1=R_1\circ T_f$, where $T_f \triangleq h_f^{-1} \circ T_f^{e_1} \circ h_f :\RR \to \RR$.
			\end{enumerate}
			Consequently, $T_f^{e_1}$ induces a $C^k$ diffeomorphism $\overline{T_f}$ on $S^1$ such that
			$\pi \circ T_f = \overline{T_f} \circ \pi$.
		\end{claim}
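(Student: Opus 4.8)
The plan is to realize $\tildeF^u_f(0)$ as a copy of $\RR$ on which $T_f^{e_d}$ acts as a fixed-point-free diffeomorphism, linearize that action to the unit translation $R_1$, and then read off items (1)--(2) and the quotient statement by pure algebra. First I would fix a $C^r$ diffeomorphism $\iota:\RR\to\tildeF^u_f(0)$ parametrizing the one-dimensional unstable leaf (a properly embedded $C^r$ line, since $\mathcal{F}^u_f$ is one-dimensional with $C^r$ leaves), and set $g:=\iota^{-1}\circ T_f^{e_d}\circ\iota$. By Proposition \ref{prop}, $T_f^{e_d}$ is a $C^k$ diffeomorphism of the leaf, so $g\in\mathrm{Diff}^k(\RR)$, and since $k<r$ no regularity is lost under conjugation by $\iota$. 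To see that $g$ has no fixed point, recall from Proposition \ref{prop}(2) that $H\circ T_f^{e_d}=T_A^{e_d}\circ H$, while on the linear leaf $\tildeF^u_A(0)=E^u_A$ the map $T_A^{e_d}$ is the affine translation $x\mapsto x+P^u(e_d)$, where $P^u$ is the projection onto $E^u_A$ along $E^s_A$. Because $E^s_A$ contains no nonzero integer vector (otherwise $A^n e_d$ would tend to $0$ through nonzero lattice points), we have $P^u(e_d)\neq 0$, so $T_A^{e_d}$ is fixed-point-free; as $H$ restricts to a homeomorphism $\tildeF^u_f(0)\to E^u_A$ conjugating $T_f^{e_d}$ to $T_A^{e_d}$, the map $T_f^{e_d}$, hence $g$, is fixed-point-free as well. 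A fixed-point-free homeomorphism of $\RR$ is orientation preserving, so after orienting the leaf appropriately I may assume $g(t)>t$ for all $t$.

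The core step is then the classical linearization of a translation-like map: a fixed-point-free, orientation-preserving $C^k$ diffeomorphism $g$ of $\RR$ with $g>\mathrm{id}$ is $C^k$-conjugate to $R_1$. I would prove this by the fundamental-domain construction: choosing $t_0$ and a $C^k$ diffeomorphism $\alpha:[0,1]\to[t_0,g(t_0)]$, extend by $\psi(t):=g^n\big(\alpha(t-n)\big)$ for $t\in[n,n+1]$, $n\in\ZZ$. This $\psi$ automatically satisfies $\psi\circ R_1=g\circ\psi$, is a $C^k$ diffeomorphism away from the integers, and is surjective (as $g>\mathrm{id}$ forces $g^{\pm n}(t_0)\to\pm\infty$). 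To make it globally $C^k$ one prescribes the one-sided jets of $\alpha$ at $0$ and $1$ so that they glue, the first-order condition being $\alpha'(1)=g'(t_0)\,\alpha'(0)$ together with its higher-order analogues (and, in the class $k=1+\mathrm{AC}$, choosing $\alpha'$ absolutely continuous with matching one-sided endpoint values so that $\psi'$ is globally absolutely continuous). Setting $h_f:=\iota\circ\psi$ then produces a $C^k$ diffeomorphism $\RR\to\tildeF^u_f(0)$ with $h_f\circ R_1=T_f^{e_d}\circ h_f$, which is item (1).

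Items (2) and the quotient statement are then formal. Define $T_f:=h_f^{-1}\circ T_f^{e_1}\circ h_f$, which is $C^k$ since $h_f$ is a $C^k$ diffeomorphism and $T_f^{e_1}$ is $C^k$. Rewriting item (1) as $R_1\circ h_f^{-1}=h_f^{-1}\circ T_f^{e_d}$ and using the commutativity $T_f^{e_1}\circ T_f^{e_d}=T_f^{e_1+e_d}=T_f^{e_d}\circ T_f^{e_1}$ from Proposition \ref{prop}(1), I would compute
\[ T_f\circ R_1=h_f^{-1}\circ T_f^{e_1}\circ T_f^{e_d}\circ h_f=h_f^{-1}\circ T_f^{e_d}\circ T_f^{e_1}\circ h_f=R_1\circ T_f, \]
which is item (2). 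Since $T_f$ is a $C^k$ diffeomorphism of $\RR$ commuting with the deck transformation $R_1$ of $\pi:\RR\to S^1$, it descends to a $C^k$ diffeomorphism $\overline{T_f}$ of $S^1$ with $\pi\circ T_f=\overline{T_f}\circ\pi$.

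The main obstacle is the linearization lemma in the second step: securing that the conjugacy $\psi$ is genuinely $C^k$, with no loss of regularity, across the boundary of the fundamental domain. For integer $k$ this is the standard jet-matching argument, but in the borderline classes $C^{2+\e}$ and $C^{1+\mathrm{AC}}$ relevant to Proposition \ref{prop H smooth} one must check that the endpoint gluing preserves exactly the H\"older modulus, respectively the absolute continuity, of the derivative. Everything else in the claim is formal manipulation of the $\ZZ^d$-action and the conjugacy $H$.
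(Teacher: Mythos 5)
Your proposal is correct and follows essentially the same route as the paper: both construct $h_f$ by choosing a parametrization of a fundamental domain for the fixed-point-free action of $T_f^{e_d}$ on the leaf, extend equivariantly by $(T_f^{e_d})^{n}$, and then derive item (2) and the descent to $S^1$ formally from the commutativity in Proposition \ref{prop}. The only difference is at the gluing step you flag as the main obstacle: the paper defines the fundamental-domain chart on an overlapping interval $(-\e,1]$ with $h_0 = T_f^{e_d}\circ\gamma(\cdot-1)$ near $1$ and $h_0=\gamma$ near $0$, so the extension is $C^k$ across the integers tautologically, with no separate jet- or modulus-matching argument needed in the borderline classes $C^{2+\e}$ and $C^{1+{\rm AC}}$.
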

		
		\begin{proof}[Proof of Claim \ref{claim f circle map}]

		We would like to define the conjugacy $h_f$ locally and extend it to $ \mathbb{R} $ by $ T^{e_d}_f $. More specifically, let $\gamma:(-\e,\e) \to \tildeF^u_f(0)$ be a $C^r$ diffeomorphism onto the image and $\e$ be small enough such that
		$T_f^{e_d}\big(\gamma(-\e,\e) \big)\cap \gamma(-\e,\e) =\emptyset $. This can be done by the $C^r$ leaf $\tildeF^u_f(0)$ and the $C^k$ diffeomorphism $T_f^{e_d}|_{\tildeF^u_f(0)}$ with $T_f^{e_d}(0)\ne 0$.
	
		Then, we could define a $C^k$ diffeomorphism onto the image,
		$h_0:(-\epsilon,1] \to \tildeF^u_f(0)$ such that
		\begin{equation}         		
			h_0(x)= \begin{cases}
				\gamma(x),    & x\in (-\e,\e);\\
				T_f^{e_d}\circ \gamma(x-1),      &x\in (1-\epsilon,1];\\
				\varphi(x),        & x\in [\e,1-\e].
			\end{cases}
		\end{equation}     	
		where $\varphi$ is a $C^k$ diffeomorphism onto the image and can be chosen arbitrarily. 
		Let 
		\begin{align*}
			h_f:\mathbb{R} &\to \tildeF^u_f(0),\\
			h_f(x) \triangleq (T_f^{e_d})^{[x]} &\circ h_0(x-[x]), \quad \forall x\in \mathbb{R}. 
		\end{align*}
		where $[x]$ stands for the integer part of $x$. By the construction, one can verify that $h_f$ and $T_f$ are both $C^k$ diffeomorphisms and $h_f \circ R_1=T_f^{e_d} \circ h_f$ directly. And $T_f\circ R_1=R_1\circ T_f$ is guaranteed by the commutativity of $T_f^n$, see Proposition\ref{prop}. 
		Indeed,
		\begin{align*}
			T_f\circ R_1 &=h_f^{-1} \circ T_f^{e_1} \circ h_f \circ R_1 
			=h_f^{-1} \circ T_f^{e_1} \circ T_f^{e_d}\circ h_f\\
			&=h_f^{-1} \circ T_f^{e_d} \circ T_f^{e_1}\circ h_f
			=(h_f^{-1} \circ T_f^{e_d}\circ h_f) \circ (h_f^{-1} \circ T_f^{e_1}\circ h_f)=R_1\circ T_f .
		\end{align*}      
	Then we obtain the desired diffeomorphism $h_f$ and hence a $C^k$ diffeomorphism $\overline{T_f}:S^1\to S^1$.
	\end{proof}
	
	\begin{claim}\label{claim A circle map}
		There exists a $C^{\infty}$ diffeomorphism $h_A:\RR \to \tildeF^u_A(0)$ satisfying the followings
		\begin{enumerate}
			\item $h_A \circ R_1=T_A^{e_d} \circ h_A$;
			\item  $R_\alpha= h^{-1}_A\circ T^{e_1}_A\circ h_A$, where
			 $\alpha$ is an irrational algebraic number. 
		\end{enumerate}
		In particular, $R_\alpha$ induces a rotation $\overline{R_\alpha}$ on $S^1$. Moreover if $d=2$ (the $\TT^2$ case),  one has ${\rm deg}(\alpha)=2$. 
	\end{claim}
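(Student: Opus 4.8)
The plan is to exploit that, for the linearization $A$, the lifted foliations are affine, so that every $T_A^n$ acts on $\tildeF^u_A(0)$ as an honest translation and the conjugacy $h_A$ can be taken \emph{linear}. First I would fix a nonzero vector $v$ spanning the one-dimensional subspace $E^u_A$ and identify $\tildeF^u_A(0)=\RR v$. Let $\ell$ be the linear functional determined by $\ker\ell=E^s_A$ and $\ell(v)=1$. Writing $n=n^s+\ell(n)\,v$ with $n^s\in E^s_A$ and intersecting $\tildeF^s_A(tv+n)=tv+n+E^s_A$ with $\RR v$, a one-line computation gives $T_A^n(tv)=(t+\ell(n))v$; thus on $\RR v$ each $T_A^n$ is translation by $\ell(n)$. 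I would then set $h_A(x)=\ell(e_d)\,x\,v$, which is linear and hence $C^\infty$. Conditions (1) and (2) reduce to the algebraic identities $h_A(x+1)=T_A^{e_d}(h_A(x))$ and $h_A^{-1}\circ T_A^{e_1}\circ h_A(x)=x+\ell(e_1)/\ell(e_d)$, so that $\overline{R_\alpha}$ is the induced rotation with $\alpha=\ell(e_1)/\ell(e_d)$, provided $\ell(e_d)\neq 0$.

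It then remains to verify $\ell(e_d)\neq 0$ and that $\alpha$ is irrational and algebraic. For this I would use $E^s_A\cap\ZZ^d=\{0\}$: if $0\neq w\in E^s_A$ were integral, then $A^n w\in\ZZ^d$ together with $A^n w\to 0$ (stability) would force $A^n w=0$ for large $n$, contradicting invertibility of $A$. In particular no $e_i$ lies in $E^s_A$, so $\ell(e_i)\neq 0$ for all $i$; and if $\alpha=\ell(e_1)/\ell(e_d)=p/q\in\mathbb{Q}$, then $\ell(q e_1-p e_d)=0$ would place the nonzero integer vector $q e_1-p e_d$ into $E^s_A$, again impossible, so $\alpha$ is irrational. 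For algebraicity the key observation is that $A(E^s_A)=E^s_A$ forces $\ell\circ A$ to annihilate $E^s_A$ as well; since such functionals form a line, $\ell\circ A=\mu\,\ell$, and evaluating at $v$ gives $\mu=\lambda$, the unstable eigenvalue (necessarily real, as $E^u_A$ is one-dimensional and $A$ is real). Hence $\ell$ is a left eigenvector of the integer matrix $A$, i.e. $\ell^{\mathsf{T}}$ is an eigenvector of $A^{\mathsf{T}}$ for the algebraic integer $\lambda$, so $\ell$ has coordinates in $\mathbb{Q}(\lambda)$ and $\alpha\in\mathbb{Q}(\lambda)$ is algebraic.

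Finally, when $d=2$ the characteristic polynomial of $A$ is a monic integer quadratic, so $[\mathbb{Q}(\lambda):\mathbb{Q}]=2$; combined with $\alpha\in\mathbb{Q}(\lambda)\setminus\mathbb{Q}$ this yields $\mathbb{Q}(\alpha)=\mathbb{Q}(\lambda)$ and hence ${\rm deg}(\alpha)=2$, exactly what is needed to invoke the K.O. condition in Theorem \ref{thm5}. I expect the genuinely delicate point \emph{not} to be the construction of $h_A$ — linearity of $A$ trivializes it, in sharp contrast with Claim \ref{claim f circle map}, where the leaf is only $C^r$ and $h_f$ must be glued by hand — but rather the arithmetic of $\alpha$. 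The step that unlocks everything is recognizing $\ell$ as a left eigenvector of $A$, which simultaneously delivers algebraicity and, in dimension two, the exact degree.
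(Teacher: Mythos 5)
Your proposal is correct and follows essentially the same route as the paper: the paper also takes $h_A$ to be the linear map with $h_A(0)=0$ and $h_A(1)=T_A^{e_d}(0)$, observes that the $T_A^n$ are translations because the lifted foliations of $A$ are affine, and identifies $\alpha$ as a ratio of coordinates of an eigenvector attached to the splitting $E^s_A\oplus E^u_A$, deducing irrationality from $E^s_A\cap\ZZ^d=\{0\}$ and degree $2$ from the quadratic characteristic polynomial when $d=2$. The only (minor) divergence is that you phrase the arithmetic through the annihilating functional $\ell$ of $E^s_A$, recognized as a left eigenvector of $A$, whereas the paper writes $\alpha=x_1/x_d$ for a (right) eigenvector $\vec v\in\tildeF^s_A(0)$; your formulation is the cleaner one for $d>2$, where $E^s_A$ is $(d-1)$-dimensional and the dual eigenvector is the natural object, but the two arguments are the same in substance.
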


\begin{proof}[Proof of Claim \ref{claim A circle map}]
	Let $h_A:\mathbb{R}\to \tildeF^u_A(0)$ be the linear map such that
	$h_A(0)=0\in \mathbb{R}^d$ and $h_A(1)=T_A^{e_d}(0)$.
	Then, $h_A^{-1} \circ T_A^{e_1} \circ h_A$ is actually a translation
	$R_{\alpha}(x)=x+\alpha, x\in \mathbb{R}$.
	By elementary calculate, $\alpha=x_1/x_d$ where $\vec{v}=(x_1,...,x_d)$ (given under the an orthonormal basis $\{ e_i\}_{i=1}^d$) is an eigenvector of $A$ in
	$\tildeF^s_A(0)$. Then $\alpha$ is an irrational algebraic number. Indeed, the irrational eigenvectors of $A$ implie that there is at least a pair of irrationally related coordinates $(x_i,x_j),(i\ne j)$ of $\vec{v}$ which we may assume that is $(x_1,x_d)$ and by the fact that the set of algebraic numbers is a field. Moreover, $\alpha=x_1/x_d$ is a quadratic irrational in the case of $d=2$.
\end{proof}
		
		Since $H\circ T_f^{e_1}=T_A^{e_1}\circ H$ (see Proposition \ref{prop}),   $H$ also induces a conjugacy $\overline{H}:S^1\to S^1$ from $\overline{T_f}$ to $\overline{R_{\alpha}}$.  Indeed, let $\widehat{H}\triangleq h_A^{-1} \circ H\circ h_f:\RR\to \RR$. Then  by Proposition \ref{prop}, Claim \ref{claim f circle map} and Claim \ref{claim A circle map}, one  has
		\begin{enumerate}
			\item $\widehat{H}\circ R_1=R_1\circ \widehat{H}$;
			\item $R_\alpha \circ \widehat{H} =\widehat{H}\circ T_f$.
		\end{enumerate}
	In particular,  $\widehat{H}:\RR\to \RR$ induces $\overline{H}:S^1\to S^1$ with  $\pi\circ \widehat{H}=\overline{H} \circ \pi$. 
		Moreover, $\overline{H} \circ \overline{T_f} =\overline{R_{\alpha}}\circ \overline{H}$. Namely, we have the following commutative diagram:
		\[ \xymatrix@=15ex{
			\mathcal{F}_f^u(0)\ar[r]^{h_f^{-1}} \ar[d]_{H} \ar@(lu,ru)^{T_f^{e_1}} 
			&  \mathbb{R}\ar[r]^{\pi} \ar[d]_{\widehat{H}} \ar@(lu,ru)^{T_f}  
			& S^1\ar[d]_{\overline{H}} \ar@(lu,ru)^{\overline{T_f}} 
			\\ \mathcal{F}_A^u(0)\ar[r]_{h_A^{-1}} \ar@(ld,rd)_{T_A^{e_1}}              
			&  \mathbb{R}\ar[r]_{\pi} \ar@(ld,rd)_{R_\alpha}               
			& S^1\ar@(ld,rd)_{\overline{R_\alpha}} 
		} 
		\]
		
		By Theorem\ref{thm5}, $\overline{H}^{\pm 1}$ is absolutely continuous, so is $\widehat{H}^{\pm 1}:\RR\to \RR$.  Note that in the case of $\TT^2$ and $k=1+{\rm AC}$, one has that both $T_f, T_f^{-1}$ are $C^{1+{\rm AC}}$-smooth. It follows that there is  $C>1$ such that  $|T^{''}_f(x)|<C$ and $|T^{'}_f(x)|>\frac{1}{C}$   for Lebesgue-almost everywhere $x\in S^1$.  Hence  $T^{''}_f/T_f'\in L^2$.

		Thus $H=h_A\circ \widehat{H} \circ h_f^{-1}$ and $H^{-1}$ are also absolutely continuous along unstable leaves. Finally, by Lemma \ref{lem AC to Cr}, $H$ is $C^r$-smooth restricted on unstable leaves.
	\end{proof}

	\section*{Acknowledgements}
	\addcontentsline{toc}{section}{Acknowledgements}
	\qquad We are grateful for the valuable communication and suggestions from Yi Shi.

	\bibliographystyle{plain}
	
	\bibliography{Smooth_Stable_Foliations_of_Anosov_diffeomorphisms}

		\flushleft{\bf Ruihao Gu} \\
		Shanghai Center for Mathematical Sciences, Fudan University, Shanghai 200433, Peoples Republic of China\\
	\textit{E-mail:} \texttt{ruihaogu@fudan.edu.cn}\\

\end{document}